\title{Universal minimal flows of generalized Wa\.zewski  dendrites}
\newtheorem{theorem}{Theorem}%[section]
\newtheorem{lemma}[theorem]{Lemma}
\theoremstyle{remark}
\newtheorem{remark}[theorem]{\bf Remark}
\newtheorem{corollary}[theorem]{\bf Corollary}
\newtheorem{example}[theorem]{\bf Example}
\newtheorem{proposition}[theorem]{\bf Proposition}
\newtheorem{question}[theorem]{\bf Question}
\def\acts{\curvearrowright}
\newcommand{\f}{\mathcal{F}}
\newcommand{\g}{\mathcal{G}}
\newcommand{\G}{\mathbb{G}}
\newcommand{\T}{\mathcal{T}}
\newcommand{\aut}{{\rm Aut}}
\newcommand{\lan}{\mathcal{L}}
\newcommand{\mpp}{{M_P}}
\newcommand{\A}{A}
\newcommand{\dom}{W_\omega}
\newcommand{\rest}{\restriction}
\def\aut{{\rm Aut}}
\newcommand\reallywidehat[1]{%
\savestack{\tmpbox}{\stretchto{%
  \scaleto{%
    \scalerel*[\widthof{\ensuremath{#1}}]{\kern-.6pt\bigwedge\kern-.6pt}%
    {\rule[-\textheight/2]{1ex}{\textheight}}%WIDTH-LIMITED BIG WEDGE
  }{\textheight}% 
}{0.5ex}}%
\stackon[1pt]{#1}{\tmpbox}%
}
\author[A. Kwiatkowska]{Aleksandra Kwiatkowska}
\address{Institut f\"{u}r Mathematische Logik und Grundlagenforschung, Universit\"{a}t  M\"{u}nster,  
Einsteinstrasse 62,
48149  M\"{u}nster,
Germany {\bf{and}} 
Instytut Matematyczny, Uniwersytet Wroc{\l}awski,  pl. Grunwaldzki 2/4, 50-384 Wroc{\l}aw, Poland}
\email{kwiatkoa@uni-muenster.de}
\keywords{universal minimal flows, homeomorphism groups of Wa\.zewski dendrites, Fra\"{i}ss\'{e} limits}
\subjclass[2010]{05D10, 37B05, 54F15, 03C98}
\begin{document}

\maketitle

\begin{abstract}
We study  universal minimal flows of the homeomorphism groups of generalized   Wa\.zewski dendrites $W_P$, $P\subseteq\{3,4,\ldots,\omega\}$.
 If $P$ is finite, we prove that the universal minimal flow of the homeomorphism group $H(W_P)$ is metrizable and we compute it explicitly. This answers a question of  Duchesne.
If $P$ is infinite,
we show that  the universal minimal flow of $H(W_P)$ is not metrizable. 
This provides examples of 
topological groups which are Roelcke precompact and have a non-metrizable universal minimal flow with a comeager orbit.
\end{abstract}

\section{Introduction}

The {\em order} of a point $x$ in a topological space is the number of connected components
we obtain after removing $x$. A {\em ramification point} is a point which has  order at least~3. An {\em endpoint} is a point of order 1. A {\em continuum} is a compact connected topological space.
A {\em dendrite} is a locally connected continuum  that contains no simple closed curve. All dendrites we consider
in this article will be metrizable.
A {\em Wa\.zewski dendrite} $\dom$ is a dendrite such that each ramification point of $\dom$ is of order~$\omega$ and each arc $I$ contained in $\dom$
contains a ramification point.

Moreover, for every $P\subseteq\{3,4,\ldots,\omega\}$, there exists a {\em generalized Wa\.zewski dendrite} $W_P$, that is, a dendrite
 such that each ramification point of $W_P$ is of order that belongs to $P$ and for every $p\in P$ and an arc $I$ contained in $W_P$, $I$
contains a  ramification point of order $p$. For every  $P\subseteq\{3,4,\ldots,\omega\}$, a  generalized 
Wa\.zewski dendrite 
 is unique up to homeomorphism, see Charatonik-Dilks \cite[Theorem 6.2]{CD}.
 Duchesne-Monod \cite{DM} studied structural properties of  homeomorphism groups of generalized 
Wa\.zewski dendrites, in particular, they showed that these groups are simple.

The homeomorphism group of a generalized Wa\.zewski dendrite is isomorphic (as a topological group)
to the automorphism group of a certain Fra\"{i}ss\'{e}-HP structure (i.e. the Fra\"{i}ss\'{e} limit of a family of finite 
 first-order structures, which has the joint embedding and the amalgamation properties, but not necessarily the hereditary property), which we now describe. 
Let $P$ be fixed and consider $W_P$. Let $M_P$ be the set of all ramification points of $W_P$. Let $\lan_P$ be the 
first-order language that consists
of a  4-ary relation symbol $D$ and of  unary relation symbols $K_p$ for every $p\in P$.
We let $\mpp$ to be the structure with universe~$M_P$, $D^{\mpp}(a,b,c,d)$ iff the path in $W_P$
 connecting $a$ and $b$ and the path connecting $c$ and~$d$ do not intersect
 (we emphasize that we allow here trivial paths, i.e. we allow $a=b$ or $c=d$), 
 and let $K_p^{\mpp}(a)$ iff  $a$  is  a ramification point of the order equal to $p$.

Instead of coding the tree structure using the $D$ relation, we could use the ternary betweenness relation
$B$, where $B(a,b,c)$ iff $b$ belongs to the path $ac$. Indeed, $B(a,b,c)$ holds iff $D(a,c,b,b)$ does
not hold, and $D(a,b,c,d)$ does not hold iff there exists $e$ such that $B(a,e,b)$ and $B(c,e,d)$.
Later on, we will also work with a $C$ relation, which will be defined using the $D$ relation,
moreover, the $D$ relation is used to describe boron trees, see \cite{J},
therefore we decided to work in this article with the $D$ relation rather than with the $B$ 
relation.

Propositions 2.4 and 6.1 in \cite{DM} imply:
\begin{proposition}\label{unispo}
The homeomorphism group of the generalized Wa\.zewski dendrite $W_P$, equipped with the uniform metric, is isomorphic (as a topological group)
to the automorphism group of $M_P$, equipped with the pointwise convergence metric.
\end{proposition}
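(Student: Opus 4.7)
The plan is to construct a group isomorphism $\Phi\colon H(W_P)\to\aut(\mpp)$ by restriction, and then verify it is a homeomorphism for the two prescribed topologies.

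First I would define $\Phi(h)=h\rest M_P$ for $h\in H(W_P)$. Since the order of a point is a topological invariant, any homeomorphism permutes $M_P$ and preserves each unary predicate $K_p^{\mpp}$. Moreover, $h$ sends the arc between $a$ and $b$ to the arc between $h(a)$ and $h(b)$, so disjointness of arcs is preserved and $\Phi(h)$ preserves $D^{\mpp}$. Hence $\Phi(h)\in\aut(\mpp)$, and $\Phi$ is clearly a group homomorphism.

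Next I would establish bijectivity. The defining property of $W_P$ forces every arc to contain a ramification point, so $M_P$ is dense in $W_P$; combined with continuity, this yields injectivity of $\Phi$. For surjectivity, given $\phi\in\aut(\mpp)$, the uniqueness result \cite[Theorem 6.2]{CD} essentially says that $W_P$ is determined by $\mpp$ together with the way $M_P$ sits as a dense subset. Concretely, for $x\in W_P\setminus M_P$ I would pick a sequence $(a_n)\subseteq M_P$ converging to $x$, define $\widetilde\phi(x)$ as the limit of $\phi(a_n)$, and verify that this limit exists and is independent of the sequence by using the canonical tree metric extracted from $D$ and the fact that $\phi$ preserves the combinatorial distance between ramification points. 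The resulting bijection is a homeomorphism whose restriction recovers $\phi$. This extension step is what Proposition 6.1 of \cite{DM} supplies.

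Finally I would check that $\Phi$ is a topological isomorphism. The easy direction is that uniform convergence on the compact space $W_P$ implies pointwise convergence on $M_P$. The hard part will be the converse: pointwise convergence on $M_P$ implies uniform convergence on $W_P$. Here I would fix $\epsilon>0$, use compactness of $W_P$ and density of $M_P$ to choose a finite set $F\subseteq M_P$ whose removal cuts $W_P$ into finitely many components each of diameter less than $\epsilon$, and argue that if $h_n(a)$ is close to $h(a)$ for every $a\in F$, then $h_n$ must map each component of $W_P\setminus F$ into the small-diameter component of $W_P\setminus h(F)$ that corresponds under the branch-permutation forced by the action on $F$. This combinatorial-to-metric step---cutting $W_P$ at ramification points to produce small-diameter branches and controlling homeomorphisms through their action on the cut set---is the content of Proposition 2.4 of \cite{DM} and is the main obstacle. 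Everything else is formal bookkeeping.
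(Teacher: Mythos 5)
The paper gives no self-contained argument here: it deduces the proposition directly from Propositions 2.4 and 6.1 of \cite{DM}, which are precisely the two ingredients you isolate (extending an automorphism of the branch-point structure to a homeomorphism, and comparing the uniform topology with pointwise convergence on branch points). So your route is the intended one, and deferring those two steps to \cite{DM} is legitimate. Two of your own justifications, however, would not survive being written out. First, there is no ``canonical tree metric extracted from $D$'' and no ``combinatorial distance between ramification points'': since every arc of $W_P$ contains a ramification point, the betweenness structure on $M_P$ is dense in itself, so no two ramification points are adjacent in any combinatorial sense, and no metric on $W_P$ is recoverable from the isomorphism type of $M_P$ (the metric is not a homeomorphism invariant). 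The existence and well-definedness of $\widetilde\phi(x)$ must instead be argued topologically, e.g.\ by noting that the components of complements of finite subsets of $M_P$ form a basis of $W_P$ which is definable from $D$; this is what Proposition 6.1 of \cite{DM} actually does.

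Second, your ``easy direction'' is not automatic. The pointwise convergence topology on $\aut(M_P)$ is the permutation-group topology in which the countable set $M_P$ is discrete, so continuity of $\Phi$ requires that uniform convergence $h_n\to h$ force $h_n(a)=h(a)$ \emph{eventually}, not merely $h_n(a)\to h(a)$ in $W_P$; since ramification points of a fixed order may accumulate, this needs an argument (for instance: if $h_n(a)\neq h(a)$, then all but one component of $W_P\setminus\{h_n(a)\}$ lie in a single component of $W_P\setminus\{h(a)\}$, which for large $n$ is incompatible with $h_n(e_i)\to h(e_i)$ for three points $e_i$ chosen in distinct branches at $a$). Finally, in the converse direction, the component of $W_P\setminus h(F)$ receiving a given component $U$ of $W_P\setminus F$ is \emph{not} forced by the action on $F$ alone, since distinct branches at a single cut point can be permuted while fixing $F$; the estimate still goes through, because after enlarging $F$ so that both $W_P\setminus F$ and $W_P\setminus h(F)$ have all components of diameter less than $\epsilon$, the images $h(U)$ and $h_n(U)$ are both small components adjacent to the common image of a boundary point of $U$, which bounds $\sup_{x\in U} d(h(x),h_n(x))$ by $2\epsilon$. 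These repairs are exactly the content of Proposition 2.4 of \cite{DM}, so the proposal is correct in outline but optimistic in two of its local justifications.
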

%This is essentially Proposition 2.4 in \cite{DM}. %\gre{that is a proof that it is a subgroup of $Sym$}

A {\em  tree} is an acyclic connected   undirected graph.
%We will only deal with finite trees.
For a tree $T$ we denote by $V(T)$ the set of vertices and by $E(T)$  the set of edges of $T$.
A {\em degree } of a vertex  in a graph is the number of edges that come out of that vertex. An {\em endpoint} is a vertex of degree~1.
A~{\em path} is a tree such that each vertex either is an endpoint or it has degree 2.
Note that for any two vertices in a tree there is exactly one path joining them.
A path joining vertices $a$ and $b$ we will often denote by $ab$.
A {\em rooted tree} is a tree with a  distinguished point, which we call the {\em root}.
On a rooted tree $T$ with the root $r$ we consider 
the tree order $\leq_{T}$ letting $x\leq_{T}y$ iff  $x$ belongs to the path $ry$. %It is a partial order.
A {\em branch} in a rooted tree is a path $ra$, where $r$ is the root and $a$ is an endpoint.
The {\em meet } of $a,b\in T$ is the greatest lower bound of $a$ and $b$ with respect to $\leq_{T}$.
 In a rooted tree we can talk about the {\em height} of each vertex.
The root has the height equal to 0 and 
the height of $x\in T$ is taken to be the maximum plus 1 of heights of $\{v\in T\colon v<_T x\}$.
The {\em height } of a rooted tree $T$ is the maximum of the heights of all of its vertices,
we denote it by $ht(T)$.
Note that the height of $x\in T$  is equal to the length of the path $rx$,
where the {\em length} of a path is defined to be the number of edges in the path.
A {\em successor} of a vertex $x$ is any point $y\neq x$ such that $x\leq_T y$. A vertex $y$ is an {\em immediate successor} of a vertex $x$ if it is a successor of $x$
and there is no successor  $w\neq y$ of $x$ such that $x\leq_T w\leq_T y $.

%Let $P\subseteq\{3,4,\ldots,\omega\}$ be given and let $\lan$ be the language that consists of a ternary relation symbol $D$ and of unary relation symbols $
Let $\f_P$ be the family of all finite structures in the language $\lan_P$ such that the universe is a finite tree and the degree of each vertex is different from 2.
If $\A\in\f_P$, we let $D^{\A}(a,b,c,d)$ iff the path $ab$ and the path $cd$, do not intersect. 
%The relation $D^A$ ``remembers'' the tree structure 
%of $A$, see also Remark \ref{edge}.
Take $K_p$ such that for every $a\in\A$ there is exactly one $p\in P$ such that $K^{\A}_p(a)$, and if $K^{\A}_p(a)$ then
the degree of $a$ is not greater than $p$.

A first-order structure $M$ is {\em ultrahomogeneous} with respect to a family of finite substructures $\f$ if for any finite substructures 
$A,B\subseteq M$, $A,B\in\f$, and an isomorphism $p\colon A\to B$, there is an automorphism of $M$
extending $p$. 

 Proposition 6.1 in \cite{DM} together with Proposition \ref{unispo} imply:
\begin{proposition}
For every $P\subseteq\{3,4,\ldots,\omega\}$, the structure $M_P$ is ultrahomogeneous with respect to $\f_P$.
\end{proposition}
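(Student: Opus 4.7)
The plan is to invoke Proposition~\ref{unispo} to reformulate the statement as a homeomorphism-extension property for $W_P$: for any finite $A, B \subseteq M_P$ in $\f_P$ and any isomorphism $p\colon A \to B$, it suffices to produce a homeomorphism $h\colon W_P \to W_P$ with $h|_A = p$. To set up, I would form the convex hulls $T_A = \bigcup_{a, a' \in A}[a,a']$ and $T_B = \bigcup_{b, b' \in B}[b,b']$ in $W_P$, where $[\cdot,\cdot]$ denotes the unique arc in $W_P$. Each is a finite topological tree whose combinatorial structure coincides with that of $A$ (resp.\ $B$) via the $D$-relation, and whose vertex orders in $W_P$ are recorded by the $K_q$ predicates and hence preserved by $p$.

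Next I would extend $p$ to a homeomorphism $h_0\colon T_A \to T_B$ edge by edge. The key point is that each edge $[a, a']$ of $T_A$ is an arc whose interior contains, for every $q \in P$, a countable dense set of $W_P$-ramification points of order $q$ (from the defining property of $W_P$), and likewise for $[p(a), p(a')]$. A Cantor-style back-and-forth on the resulting labeled dense subsets produces an order-preserving bijection which extends uniquely to a homeomorphism of arcs matching ramification points by their order. Gluing these edge homeomorphisms yields $h_0$.

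I would then analyze $W_P \setminus T_A$: each connected component is open with closure meeting $T_A$ in a single point $c \in T_A$, and at each $c$ the number of attached components equals $p_c - \deg_{T_A}(c)$, where $p_c \in P$ is the order of $c$ in $W_P$ (with $\deg_{T_A}(c) = 2$ when $c$ lies in the interior of an edge of $T_A$). Each such closure $\bar C$ is itself a dendrite whose interior ramification points retain the same order they had in $W_P$, and in which each order of $P$ remains densely realized on every arc; by the Charatonik--Dilks uniqueness \cite{CD}, $\bar C \cong W_P$.

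To assemble $h$, for each $c \in T_A$ I would pick a bijection between the components attached at $c$ and those at $h_0(c) \in T_B$, arranging it to match null sequences of diameters on both sides (ensuring continuity at $c$); then for each paired $\bar C, \bar{C'}$ I would choose a homeomorphism $\bar C \to \bar{C'}$ sending $c \mapsto h_0(c)$. The resulting glued $h$ is a continuous bijection on the compact Hausdorff space $W_P$, hence a homeomorphism, and extends $p$. The main obstacle is the arc-matching in the second step: producing on each arc of $W_P$ a homeomorphism that simultaneously bijects every color-labeled dense set of interior ramification points. This is a colored analogue of Cantor's theorem on countable dense linear orders and is where the density clause in the definition of $W_P$ is essentially used.
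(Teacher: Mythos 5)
Your overall strategy is sound and is essentially the standard direct proof of homogeneity for Wa\.zewski dendrites; note that the paper itself does not prove this proposition at all --- it is imported from Duchesne--Monod \cite{DM} (their Proposition 6.1) via Proposition \ref{unispo} --- so you are supplying an argument the paper treats as a black box. The reduction to extending $p$ to a homeomorphism of $W_P$, the identification of the convex hull $T_A$ with the abstract tree $A$ (which uses that $A\in\f_P$ forces all branch points of $T_A$ to lie in $A$, since medians are $D$-definable), the colored back-and-forth on the countably many ramification points of each arc, and the null-family argument for continuity of the glued map are all correct. (In fact the null-family property of complementary components in a dendrite makes the ``matching null sequences of diameters'' clause unnecessary: any bijection between the components attached at $c$ and those attached at $h_0(c)$ yields a continuous map.)

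The one genuine gap is in the final gluing step: ``for each paired $\overline{C}, \overline{C'}$ I would choose a homeomorphism $\overline{C}\to\overline{C'}$ sending $c\mapsto h_0(c)$.'' Charatonik--Dilks uniqueness gives $\overline{C}\cong W_P\cong\overline{C'}$, but only as an \emph{unpointed} homeomorphism; you need one carrying a prescribed endpoint of $\overline{C}$ to a prescribed endpoint of $\overline{C'}$, i.e.\ transitivity of $H(W_P)$ on endpoints. That is true, but it does not follow from the uniqueness theorem as stated, and the obvious attempt to derive it (recursing into $\overline{C}$ with the same hull-plus-components decomposition) produces an infinite regress rather than a proof. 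The standard repair is to restructure the argument as a single convergent back-and-forth: build increasing finite trees $T_A=S_0\subseteq S_1\subseteq\cdots$ and $T_B=S'_0\subseteq S'_1\subseteq\cdots$ with compatible homeomorphisms $h_n\colon S_n\to S'_n$ extending $p$, arranging that every ramification point of $W_P$ eventually appears as a vertex on both sides; then the meshes of the complementary components tend to $0$ and the $h_n$ converge uniformly to a homeomorphism of $W_P$. Each extension step only requires matching finitely many new branches at existing vertices and finitely many new branch points on existing edges --- exactly the data your $K_q$-labels and the density hypothesis control --- and never requires producing a pointed homeomorphism of an entire complementary dendrite in one shot.
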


The proposition above implies that $\f_P$ has the joint embedding property and the amalgamation property. Note that $\f_P$ does not have the hereditary property.
Moreover, as additionally for every finite subset $X\subseteq\mpp$ there is $A\in\f_P$ such that $X\subseteq A\subseteq \mpp$, we have that $\mpp$
is the Fra\"{i}ss\'{e} limit  of $\f_P$.

\begin{remark}\label{steps}
Let $i\colon (S, D^S)\to (T, D^T)$ be an embedding  of trees $S$ and $T$ in which each vertex has degree $\neq 2$. Then every edge in $S$ is mapped to a path in $T$ and
$T$ is obtained from $S$ in a sequence of the following simple steps:

\noindent 1. Start with a tree $T'$. Pick an edge $[a,b]$ in $T'$. Let $c$ and $d$ be points not in $T'$. Get $S'$ by removing edge $[a,b]$,
and by adding points $c$ and $d$, and edges $[a,c]$, $[c,b]$ and $[c,d]$.

\noindent 2. Start with a tree $T'$. Pick an endpoint $e$ in $T'$. Let $c$ and $d$ be points not in $T'$. Get $S'$ by
 adding points $c$ and $d$, and edges $[e,c]$ and $[e,d]$.
 
 \noindent 3. Start with a tree $T'$. Pick an vertex $v$ in $T'$ which is not an endpoint.
 Let $c$ be a point not in $T'$. Get
$S'$ by adding the point $c$ and the edge $[v, c]$. %put it in the context of $\f_P$ and uwzględnij the restriction on the degree}

\end{remark}

\begin{remark}\label{edge}
Note that the relation $D$ remembers which pairs of vertices are joined by an edge.
Given a tree $T$ and $a,b\in T$, $a\neq b$. Then there is an edge between $a$ and $b$ iff for every $c\in T$, $c\neq a,b$ we have that $c$ does not belong to the path $ab$ iff
for  every $c\in T$, $c\neq a,b$, $D^T(a,b,c,c)$ holds.

\begin{remark}
Let $T$ be a tree and let $E$ be the set of endpoints of $T$. Then $D^T\restriction E$ on the set $E$ is an example of a $D$-relation, as defined in \cite[Section 22]{AN}. Moreover, $D^T$ on the tree $T$ satisfies (D1)-(D3) in the definition of a $D$-relation, but not (D4).
\end{remark}

\end{remark}

\section{The universal minimal flow - preliminaries}

%\subsection{Preliminaries on Kechris-Pestov-Todorcevic}

Our goal is to compute universal minimal flows of the homeomorphism groups $H(W_P)$, equivalently, of the automorphism groups $\aut(M_P)$.

We will work in the framework provided by Kechris-Pestov-Todorcevic. Let us recall relevant definitions and theorems. The presentation  below is essentially
copied from \cite{BK}, Section 3.6. Lemma \ref{preco}, Theorem \ref{iden}, and Corollary \ref{ident} are proved there.

 A topological group $G$ is {\em extremely amenable} if every $G$-flow has a fixed point.
 A~{\em coloring} of a set $X$ is any function $c\colon  X\to \{1,2,\ldots,r\}$, for some $r\geq 2$;
 we say that $Y\subseteq X$ is  {\em $c$-monochromatic} (or just {\em monochromatic}) if $r\rest Y$ is constant.
 
Let $\g$ be a family of finite structures in a language $\lan$. 
For $A,B\in \g$ write $A\leq B$ if $A$ embeds into $B$. 
For $A,B$ in $\g$,  let ${B \choose A}$~denotes the set of all embeddings of $A$ into $B$.
We say that $A\in\g$ is a
{\em Ramsey object} if  for every $B\in \g$ with $A\leq B$ and every   integer $r\geq 2$  there exists $C\in \g$ such that for every coloring $c\colon  {C \choose A} \to\{1,2,\ldots,r\}$ there exists $h\in {C \choose B}$ such that 
$\{ h\circ f\colon  f\in {B \choose A} \}$ is monochromatic. Note that to check that $A$ is a Ramsey object it suffices to check it only for $r=2$.
We say that $\g$ is a {\em Ramsey class} (or that it has {\em Ramsey property}) if every structure in $\g$ is a Ramsey object.

A structure $A\in\g$ is {\em rigid} if it has  trivial automorphism group. 
 
Kechris-Pestov-Todorcevic \cite{KPT} worked with
 Fra\"{i}ss\'{e} families and their ordered Fra\"{i}ss\'{e} expansions, their work  was generalized by  Nguyen Van Th\'e \cite{NVT} to 
  Fra\"{i}ss\'{e} families and to arbitrary relational Fra\"{i}ss\'{e} expansions. The Kechris-Pestov-Todorcevic correspondence remains true for Fra\"{i}ss\'{e}-HP families,
  which was checked by several people, and it appears in \cite{Z}, see also \cite{BK}.

   \begin{theorem}[Kechris-Pestov-Todorcevic \cite{KPT}, see Theorem 5.1 in \cite{Z}] \label{kpt1}
   Let $\g$ be a Fra\"{i}ss\'{e}-HP family,  let $\mathbb{G}$ be its Fra\"{i}ss\'{e} limit, and let $G=\aut(\mathbb{G})$.
Then the following are equivalent: 
 \begin{enumerate}
 \item The group $G$ is extremely  amenable.
 \item The family $\g$ is a Ramsey class and it consists of rigid structures.
 \end{enumerate}
  \end{theorem}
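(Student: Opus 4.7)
The plan is to follow the standard proof of the Kechris--Pestov--Todorcevic correspondence from \cite{KPT}, adapted to the Fra\"{\i}ss\'{e}-HP setting. The only place where the hereditary property is used in the classical argument is to guarantee that finite substructures of the limit lie in the class; here I would replace this by the weaker property, available from the Fra\"{\i}ss\'{e}-HP setup, that every finite $X\subseteq\mathbb{G}$ is contained in some $A\in\g$ with $A\subseteq\mathbb{G}$. Throughout, I would use that $G=\aut(\mathbb{G})$ acts on each ${\mathbb{G}\choose A}$ by post-composition, and that by ultrahomogeneity this action is transitive.

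For $(1)\Rightarrow(2)$, assume $G$ is extremely amenable. To obtain the Ramsey property, fix $A\leq B$ in $\g$, $r\geq 2$, and a coloring $c\colon {\mathbb{G}\choose A}\to\{1,\dots,r\}$. View $c$ as a point in the compact $G$-flow $\{1,\dots,r\}^{{\mathbb{G}\choose A}}$, and let $Y\subseteq\{1,\dots,r\}^{{\mathbb{G}\choose A}}$ be the orbit closure $\overline{G\cdot c}$. Extreme amenability provides a fixed point $c^*\in Y$; since $G$ is transitive on ${\mathbb{G}\choose A}$, the coloring $c^*$ is constant. Approximating $c^*$ by $g\cdot c$ on the finite set $h\cdot{B\choose A}$ for any chosen $h\in{\mathbb{G}\choose B}$ yields $h'=g^{-1}\circ h\in{\mathbb{G}\choose B}$ with $c$ monochromatic on $h'\cdot{B\choose A}$, which is the Ramsey statement on $\mathbb{G}$; a standard compactness argument extracts a finite witness $C\in\g$. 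For rigidity, I would argue that a nontrivial $\sigma\in\aut(A)$ lets one color ${\mathbb{G}\choose A}$ by a property that always separates $f$ from $f\circ\sigma$, so that no $h'\cdot{B\choose A}$ can ever be monochromatic, contradicting the Ramsey statement just proved.

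For $(2)\Rightarrow(1)$, I would reduce to showing that for each $G$-flow $X$, point $x_0\in X$, and open neighborhood $U$ of some potential fixed point, one can find $g\in G$ with $gx_0\in U$. Using the Kechris--Pestov--Todorcevic description of the right uniform structure on $G$ in terms of pointwise stabilizers of finite $A\subseteq\mathbb{G}$, the problem becomes: given a coloring of ${\mathbb{G}\choose A}$ induced by the $G$-action on $X$, find a copy of a larger $B\in\g$ on which the coloring is constant. This is exactly the Ramsey property for $(A,B)$, and rigidity ensures the coloring is well-defined (it depends only on the isomorphism type of the embedding, not on an arbitrary labelling). A standard Zorn/compactness argument then produces a fixed point.

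The main obstacle is bookkeeping around the failure of the hereditary property: several natural moves in the classical proof — restricting colorings to arbitrary finite subsets, identifying ${\mathbb{G}\choose A}$ with isomorphism classes of substructures — have to be rephrased so that they only mention substructures lying in $\g$. This rewording has already been carried out in detail in \cite{Z} and \cite{BK}, so in practice I would follow the presentation there verbatim.
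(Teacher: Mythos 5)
The paper does not prove this theorem; it quotes it from \cite{KPT} and Theorem 5.1 of \cite{Z}, remarking only that the correspondence survives the passage to Fra\"{i}ss\'{e}-HP families. Your outline is the standard KPT argument --- the orbit-closure fixed-point and transitivity for $(1)\Rightarrow(2)$, the order-type coloring for rigidity, and the almost-fixed-point compactness reduction for $(2)\Rightarrow(1)$ --- with the HP bookkeeping deferred to exactly the sources the paper cites, so it is correct and matches the intended proof.
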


    Let $\g$ be a Fra\"{i}ss\'{e}-HP family in a language $\lan$,  let $\mathbb{G}$ be its Fra\"{i}ss\'{e} limit, and let $G=\aut(\mathbb{G})$.
    Let $\g^*$ be a  Fra\"{i}ss\'{e}-HP family in a language $\lan^*\supseteq \lan$, $\lan^*\setminus \lan$  relational,
     such that the map defined on $\g^*$ and given by $A^*\mapsto A^*\restriction \lan$ is onto $\g$.
     In that case we say that $A^*$ is an {\em expansion} of $A^*\restriction \lan$ and that $A^*\restriction \lan$ 
     is a {\em reduct} of $A^*$, and that $\g^*$ is an {\em expansion} of $\g$.
    % for every $A^*\in\g^*$, $A^*\rest\lan\in\g$, that is, every $A^*\in\g^*$ is an {\em expansion} of some $A\in\g$, and $\g^*\restriction\lan=\g$, that is, every $A\in\g$ is a {\em reduct} of some structure $A^*\in\G^*$. % or $\g^*$ is an {\em expansion} of $\g$.
  Let $\mathbb{G}^*$ be the  Fra\"{i}ss\'{e} limit of $\g^*$, and let $G^*=\aut(\mathbb{G}^*)$. 
  
We say that the   expansion $\g^*$ of $\g$ is {\em reasonable} if 
  for any $A,B\in\g$, an embedding $\alpha\colon  A\to B$ and an expansion $A^*\in\g^*$ of $A$,
there is an expansion $B^*\in\g^*$ of $B$ such that $\alpha\colon  A^*\to B^*$ is an embedding. It is
%     $\g^*$ is a {\em precompact expansion} of $\g$, that is, 
   {\em precompact} if  
 for every $A\in\g$ there are only finitely many $A^*\in\g^*$ such that $A^*\restriction \lan= A$. 
  We say that $\g^*$ has the {\em expansion property}  relative to $\g$ if for any $A^*\in\g^*$ there is 
  $B\in\g$ such that for any expansion  $B^*\in\g^*$, there is an embedding $\alpha\colon A^*\to B^*$.
The following proposition explains the importance of the notion of reasonability.

\begin{proposition}[\cite{KPT}, \cite{NVT}, see Proposition 5.3 in \cite{Z}]\label{kpt_reas}
The expansion $\g^*$ of $\g$ is  reasonable if and only if $\mathbb{G}^*\rest \lan= \mathbb{G}$.
\end{proposition}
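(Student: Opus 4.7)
The plan is to derive both implications from the characterization of $\mathbb{G}$ as the unique countable $\g$-saturated and $\g$-ultrahomogeneous structure, transferring between the two conditions through the defining properties of the Fra\"{i}ss\'{e}-HP limit $\mathbb{G}^*$.

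For the forward direction, suppose $\g^*$ is reasonable. Countability of $\mathbb{G}^*\rest\lan$ is immediate, and $\g$-saturation transfers directly from $\mathbb{G}^*$: any finite $X\subseteq\mathbb{G}^*\rest\lan$ lies inside some $E^*\in\g^*$ with $E^*\leq\mathbb{G}^*$, hence $X\subseteq E^*\rest\lan\in\g$. To verify $\g$-ultrahomogeneity I would establish the extension property for $\mathbb{G}^*\rest\lan$: given an embedding $A\leq B$ in $\g$ and an embedding $f\colon A\to\mathbb{G}^*\rest\lan$, enclose $f(A)$ in the reduct of some $A^*\in\g^*$ sitting inside $\mathbb{G}^*$; amalgamate $A^*\rest\lan$ with $B$ over $A$ inside $\g$ to produce $C\in\g$; invoke reasonability on the embedding $A^*\rest\lan\to C$ equipped with the expansion $A^*$ to secure a $\g^*$-expansion $C^*$ of $C$ with $A^*\leq C^*$; realize $C^*$ inside $\mathbb{G}^*$ over $A^*$ via the Fra\"{i}ss\'{e}-HP extension property of $\mathbb{G}^*$; and take reducts. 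The resulting copy of $C$ inside $\mathbb{G}^*\rest\lan$ extends $f(A)$ and contains a copy of $B$, yielding the desired extension of $f$.

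For the reverse direction, suppose $\mathbb{G}^*\rest\lan=\mathbb{G}$. Given an embedding $\alpha\colon A\to B$ in $\g$ and an expansion $A^*\in\g^*$ of $A$, embed $A^*$ into $\mathbb{G}^*$; its reduct gives $A\leq\mathbb{G}^*\rest\lan=\mathbb{G}$, and the extension property of $\mathbb{G}$ produces a copy of $B$ inside $\mathbb{G}^*\rest\lan$ extending $A$ along $\alpha$. The induced $\lan^*$-substructure of $\mathbb{G}^*$ on the universe of this copy of $B$ then furnishes the expansion $B^*$ of $B$ with $A^*\leq B^*$, membership in $\g^*$ being obtained from the $\g^*$-saturation of $\mathbb{G}^*$ in the standard manner.

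The main obstacle is the forward direction: the chain enclose--amalgamate--expand--realize--restrict must be orchestrated so that the final embedding of $C^*$ inside $\mathbb{G}^*$ genuinely extends $f(A)$ rather than only some isomorphic copy. This requires the amalgamation inside $\g$ to identify its copy of $A^*\rest\lan$ with $f(A)$, reasonability to be applied to this identification, and the Fra\"{i}ss\'{e}-HP extension for $\mathbb{G}^*$ to be invoked over the actual embedding $A^*\leq\mathbb{G}^*$ already chosen at the first step.
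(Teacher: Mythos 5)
The paper does not actually prove this proposition; it is quoted from \cite{KPT}, \cite{NVT} and Proposition 5.3 of \cite{Z}, so there is no in-text argument to compare yours against, and I can only judge the proposal on its own terms against the definitions in force here. Your forward direction is correct and is the standard argument: countability and cofinality of $\g$-substructures are as you say, and the enclose--amalgamate--expand--realize--restrict chain does give the extension property of $\mathbb{G}^*\rest\lan$ with respect to $\g$. The one point worth making explicit is that $f(A)$, with its induced $\lan$-structure, is isomorphic to $A$ and hence lies in $\g$ (Fra\"{i}ss\'{e}-HP families are closed under isomorphism), so the inclusion $f(A)\hookrightarrow E^*\rest\lan$ is genuinely a morphism of $\g$ to which amalgamation applies; you also correctly identify and handle the bookkeeping needed so that the final embedding extends $f$ itself.

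The reverse direction, however, has a genuine gap at its last step. After extending the embedding of $A=A^*\rest\lan$ to an embedding $g\colon B\to\mathbb{G}$, you take the induced $\lan^*$-substructure of $\mathbb{G}^*$ on $g(B)$ and assert it lies in $\g^*$ ``by $\g^*$-saturation in the standard manner''. Saturation only places $g(B)$ \emph{inside} some $E^*\in\g^*$ sitting in $\mathbb{G}^*$; passing from $E^*$ down to the induced structure on $g(B)$ itself requires a hereditary-type property, and in the Fra\"{i}ss\'{e}-HP setting of this paper that is exactly the relative HP, which is defined only after this proposition and is not among its hypotheses. In the hereditary setting of the cited references the step is immediate and your proof is the standard one; here it is not automatic, and in fact the implication can fail without it. For instance, let $\lan=\emptyset$, let $\g$ be the finite sets, let $\lan^*=\{U\}$ with $U$ unary, and let $\g^*$ consist of the finite $(X,U)$ with $U=\emptyset$ when $|X|\leq 1$ and $|U|\geq 2$ when $|X|\geq 2$. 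This is a Fra\"{i}ss\'{e}-HP family whose reduct map is onto $\g$ and whose limit is a countable set with $U$ and its complement infinite, so its $\lan$-reduct is $\mathbb{G}$; yet no expansion in $\g^*$ of a two-element set is compatible with the expansion $(\{a\},\emptyset)$ of a singleton, so reasonability fails. So either the relative HP (or heredity of $\g^*$) must be added as a hypothesis for this direction, or the final step must be argued by some means other than saturation; as written, the step would fail.
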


We say that $\mathcal{G^*}$ has the {\em relative HP} (the relative hereditary property) with respect to~$\mathcal{G}$ 
if for every $A, B\in\g$ such that  $A$ is a substructure of  $B$ and for $B^*\in\g^*$, an expansion of $B$,
we have $B^*\restriction A\in \g^*$. This is equivalent to saying that
for any $A\in\g$ and an embedding $i\colon A\to \mathbb{G}$ there is an expansion
$A^*\in\g^*$ of $A$ such that $i\colon A^*\to \mathbb{G}^*$ is an embedding.
The relative HP property is used to show that when an expansion $\mathcal{G}^*$ of $\mathcal{G}$ is precompact, then  $\aut(\G)/\aut(\G^*)$  is precompact in the quotient of the right uniformity, the proof is 
contained in Section 3.6 in \cite{BK}.
  \begin{lemma}\label{preco}
Suppose that  $\mathcal{G}^*$ is a  reasonable precompact expansion of $\mathcal{G}$
 and that the relative HP holds. Then
the right uniform space $\aut(\G)/\aut(\G^*)$ is precompact.
\end{lemma}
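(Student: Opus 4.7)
The plan is to produce, for every identity neighborhood $V$ in $G$, finitely many $g_1,\ldots,g_n \in G$ with $G = \bigcup_{i=1}^n V g_i G^*$; this is exactly precompactness of $G/G^*$ in the quotient of the right uniformity. A neighborhood basis of the identity in $G=\aut(\G)$ consists of pointwise stabilizers $G_A = \{g \in G : g\rest A = \mathrm{id}\}$, and since every finite subset of $\G$ is contained in some $A\in\g$ (as $\G$ is the Fra\"{i}ss\'{e} limit of $\g$), I may restrict to $V = G_A$ with $A\in\g$.

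Next I would set up the correspondence between cosets and expansions. The group $G$ acts naturally on the set of $\lan^*$-expansions of $\G$ by $g\cdot M := g(M)$; by Proposition \ref{kpt_reas} the structure $\G^*$ is itself such an expansion, and its stabilizer under this action is precisely $G^* = \aut(\G^*)$, so $g G^* \mapsto g(\G^*)$ is a $G$-equivariant bijection from $G/G^*$ onto the orbit $G\cdot\G^*$. For any $g\in G$, the relative HP (applied to the translated expansion $g(\G^*)\cong\G^*$ and the finite substructure $A\in\g$ of its reduct $\G$) yields $g(\G^*)\rest A \in \g^*$; by the precompactness assumption there are only finitely many possibilities $A^*_1,\ldots,A^*_n$ for this expansion of $A$.

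The key step is to verify that any two cosets $g_1 G^*$, $g_2 G^*$ with $g_1(\G^*)\rest A = g_2(\G^*)\rest A$ lie in the same $G_A$-double-coset. Writing $M_i = g_i(\G^*)$, both $M_1$ and $M_2$ are isomorphic to the Fra\"{i}ss\'{e} limit $\G^*$ and agree on $A$ as $\lan^*$-structures; hence, fixing any $\lan^*$-isomorphism $\psi\colon M_1\to M_2$, the map $\psi^{-1}\rest A$ is a partial $\lan^*$-embedding of $A$ into $M_1$, and by ultrahomogeneity of $M_1$ it extends to an automorphism $\alpha$ of $M_1$. Then $h := \psi\circ\alpha\colon M_1\to M_2$ is an $\lan^*$-isomorphism with $h\rest A = \mathrm{id}_A$; viewed as a map on $\G$, it is an $\lan$-automorphism fixing $A$ pointwise, i.e.\ $h\in G_A$, and $h\cdot g_1(\G^*) = g_2(\G^*)$ gives $h g_1 G^* = g_2 G^*$. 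Choosing one representative $g_i$ from each nonempty fiber over $A^*_i$ then yields $G = G_A g_1 G^* \cup\cdots\cup G_A g_n G^*$.

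The main obstacle is the key step: one must carefully distinguish the fixed expansion $\G^*$ from its $G$-translates $g(\G^*)$, which share the same underlying set and $\lan$-reduct but differ as $\lan^*$-expansions, and then invoke ultrahomogeneity of the Fra\"{i}ss\'{e} limit on the isomorphic copies $M_1,M_2$ rather than on $\G^*$ directly. Once this pushforward action and the associated restriction map are set up correctly, the conclusion is a pigeonhole using the finitely many expansions of $A$ provided by precompactness.
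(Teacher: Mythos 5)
Your argument is correct and is essentially the proof the paper defers to \cite{BK}, Section 3.6: one bounds the number of $G_A$--$G^*$ double cosets by the (finitely many, by precompactness) expansions of $A$ in $\g^*$, using the relative HP to see that each restriction $g(\G^*)\rest A$ lies in $\g^*$ and ultrahomogeneity of $\G^*$ to place two cosets with the same restriction in one double coset. No changes are needed.
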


   Below  $(\mathbb{G}, \vec{R})$  denotes an expansion of  $\mathbb{G}$ to a structure in $\lan^*$.
 Instead of $(\mathbb{G}, \vec{R})$ we will often  just write $\vec{R}$.   
   
%Let $\lan^*\setminus \lan$ consist only of relation symbols, 
Define 
\begin{equation*}
\begin{split}
X_{\mathbb{G}^*}=&\{ \vec{R}:
{\rm\ for\ every\ } A\in\g, 
{\rm\ and\ an\ embedding\ }
i\colon  A\to \mathbb{G} {\rm\ there \ exists\ } \\ 
& A^*\in\g^*, {\rm such \ that\ } i\colon A^*\to(\mathbb{G},\vec{R})
{\rm \ is\ an\ embedding }\}.
\end{split}
\end{equation*}
The relative HP implies that
the space $X_{\mathbb{G}^*}$   contains $\mathbb{G}^*$.
   
We make $X_{\mathbb{G}^*}$ a topological space by declaring sets
\[ V_{i, A^*}=\{\vec{R}\in X_{\mathbb{G}^*} :  \text{the map }i\colon  A^*\to (\mathbb{G},\vec{R})  \text{  is an embedding} \}, \]
where $i\colon  A\to \mathbb{G}$ is an embedding,  $A^*\in\g^*$, and $A^*\rest \lan= A$, to be  open.
The group $\aut(\G^*)$ acts continuously on $X_{\mathbb{G}^*}$ via
\[ g\cdot \vec{R}(\bar{a})=\vec{R}(g^{-1}(\bar{a})).\]
%\begin{rem}
%{\rm If   $\mathcal{G}^*$ is a precompact expansion of $\mathcal{G}$  , then  $X_{\mathbb{G}^*}$ is compact.}
%\end{rem}
Reasonability and  precompactness  of the expansion $\g^*$ of $\g$ imply that the space $X_{\mathbb{G}^*}$ is compact and
zero-dimensional. % and it is  nonempty as $\mathbb{G}^*\in X_{\mathbb{G}^*}$.

{\bf{From now on till the end of this section,}} we will assume that the expansion $\g^*$ of $\g$ is  reasonable, precompact, and satisfies the relative HP.

\begin{theorem}[\cite{KPT}, \cite{NVT}, see Proposition 5.5 in \cite{Z}]\label{kpt_minim}
% Let $\g$, $\g^*$, $\mathbb{G}$, $\mathbb{G}^*$, $G$, and $G^*$ be as above and let the property $(*)$ holds.
 %Suppose  that the expansion $\g^*$ of $\g$ is reasonable and precompact.  
 The following are equivalent:
 \begin{enumerate}
 \item The flow $G\acts X_{\mathbb{G}^*} $ is minimal.
 \item  The  family $\g^*$ has  the expansion property relative to $\g$.
 \end{enumerate}
\end{theorem}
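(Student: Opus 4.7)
The plan is to prove both implications directly, exploiting in each direction the ultrahomogeneity of $\mathbb{G}$ together with compactness of $X_{\mathbb{G}^*}$.

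For $(2)\Rightarrow(1)$, I would fix $\vec{R}\in X_{\mathbb{G}^*}$ and show that $G\cdot\vec{R}$ is dense. Given a basic open set $V_{i,A^*}$, where $i\colon A\to\mathbb{G}$ is an embedding and $A^*\in\g^*$ expands $A$, the expansion property yields $B\in\g$ such that every expansion of $B$ in $\g^*$ admits an embedding of $A^*$. Pick any embedding $j\colon B\to\mathbb{G}$; by definition of $X_{\mathbb{G}^*}$ there is an expansion $B^*\in\g^*$ of $B$ making $j\colon B^*\to(\mathbb{G},\vec{R})$ an embedding, and composing with an embedding $A^*\hookrightarrow B^*$ produces an embedding $k\colon A^*\to(\mathbb{G},\vec{R})$. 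Ultrahomogeneity of $\mathbb{G}$ then supplies $g\in G$ with $g\circ k=i$, which unravels via the definition of the action to $g\cdot\vec{R}\in V_{i,A^*}$.

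For $(1)\Rightarrow(2)$, I would run a covering argument. Given $A^*\in\g^*$, fix an embedding $i\colon A\to\mathbb{G}$ with $A=A^*\rest\lan$. Minimality of $G\acts X_{\mathbb{G}^*}$ forces $G\cdot V_{i,A^*}=X_{\mathbb{G}^*}$, so compactness of $X_{\mathbb{G}^*}$ extracts a finite subcover $g_1V_{i,A^*}\cup\cdots\cup g_kV_{i,A^*}=X_{\mathbb{G}^*}$. Since $\mathbb{G}$ is a Fra\"{i}ss\'{e} limit of $\g$, there is $B\in\g$ containing $g_1(i(A))\cup\cdots\cup g_k(i(A))$, and I claim this $B$ witnesses the expansion property for $A^*$. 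Indeed, given any expansion $B^*\in\g^*$ of $B$, lift $B^*$ to some global $\vec{R}\in X_{\mathbb{G}^*}$ with $\vec{R}\rest B=B^*$, choose $j$ with $\vec{R}\in g_jV_{i,A^*}$, and observe that $g_j\circ i\colon A^*\to(\mathbb{G},\vec{R})$ is then an embedding whose image lies in $B$; the relative HP upgrades this to an embedding $A^*\to B^*$.

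The key technical step, which I anticipate to be the main obstacle, is the lifting in the previous paragraph: given a finite $B^*\in\g^*$ expanding $B\subseteq\mathbb{G}$, I must produce $\vec{R}\in X_{\mathbb{G}^*}$ that agrees with $B^*$ on $B$. Since $\g^*$ is a Fra\"{i}ss\'{e}-HP family, $B^*$ embeds into $\mathbb{G}^*$ via some $\iota$; ultrahomogeneity of $\mathbb{G}$ (applied to the two embeddings $\iota$ and the inclusion, both of $B$ into $\mathbb{G}$) provides $g\in G$ with $g\circ\iota=\mathrm{id}_B$, and $\vec{R}:=g\cdot\mathbb{G}^*\in X_{\mathbb{G}^*}$ is then the desired expansion, a direct computation with $g\cdot\vec{R}(\bar a)=\vec{R}(g^{-1}\bar a)$ showing that the inclusion $B^*\hookrightarrow(\mathbb{G},\vec{R})$ is an embedding. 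The only subtle bookkeeping is the interplay of the $G$-action with the basic sets $V_{i,A^*}$; once that is in place, ultrahomogeneity and the expansion property carry the argument.
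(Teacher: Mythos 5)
Your proof is correct. The paper does not prove this theorem itself — it is quoted from the literature (\cite{KPT}, \cite{NVT}, Proposition 5.5 in \cite{Z}) — and your argument is essentially the standard one found there: density of every orbit via the expansion property and ultrahomogeneity for $(2)\Rightarrow(1)$, and the compactness/finite-subcover argument with the translate $g\cdot\vec{R}^{\mathbb{G}}$ realizing a prescribed finite expansion for $(1)\Rightarrow(2)$. Two negligible remarks: the covering step needs $V_{i,A^*}\neq\emptyset$, which your own lifting argument (applied to $A^*$ in place of $B^*$) already supplies; and the final invocation of the relative HP is superfluous, since $\vec{R}\restriction B=B^*$ by construction, so restricting the codomain of $g_j\circ i$ already gives the embedding $A^*\to B^*$.
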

   
  \begin{theorem}[Kechris-Pestov-Todorcevic \cite{KPT}, Nguyen Van Th\'e \cite{NVT}, see Theorem 5.7 in \cite{Z}]\label{kpt2}
% Let $\g$, $\g^*$, $\mathbb{G}$, $\mathbb{G}^*$, $G$, and $G^*$ be as above and let the property $(*)$ holds.
 %Suppose  that the expansion $\g^*$ of $\g$ is reasonable and precompact. 
The following are equivalent:
 \begin{enumerate}
 \item The flow $G\acts X_{\mathbb{G}^*} $ is the universal minimal flow of $G$.
 \item The  family $\g^*$ is a rigid Ramsey class and  has the expansion property relative to~$\g$.
 \end{enumerate}
  \end{theorem}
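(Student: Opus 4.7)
The plan is to reduce this theorem to the two preceding KPT-type statements (Theorems \ref{kpt_minim} and \ref{kpt1}) together with the standard Pestov principle that a precompact homogeneous space $G/H$ gives the universal minimal $G$-flow as its right-uniform completion if and only if the $G$-action on that completion is minimal and $H$ is extremely amenable. First, Theorem \ref{kpt_minim} already gives the equivalence between the expansion property of $\g^*$ relative to $\g$ and minimality of $G \acts X_{\mathbb{G}^*}$; since any universal minimal flow is minimal, the expansion property is automatically necessary for (1). Assuming minimality, the remaining task is to show that $G \acts X_{\mathbb{G}^*}$ is universal if and only if $\g^*$ is a rigid Ramsey class. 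Applying Theorem \ref{kpt1} to the class $\g^*$ itself, whose Fra\"{i}ss\'{e}-HP limit is $\mathbb{G}^*$ with automorphism group $G^* = \aut(\mathbb{G}^*)$, rigidity plus the Ramsey property for $\g^*$ is equivalent to extreme amenability of $G^*$. Thus everything reduces to: under minimality, $X_{\mathbb{G}^*}$ is the universal minimal flow of $G$ if and only if $G^*$ is extremely amenable.

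For this final equivalence I would use the identification, proved in Section 3.6 of \cite{BK}, of the flow $X_{\mathbb{G}^*}$ with the right-uniform completion of $G/G^*$: the distinguished point $\mathbb{G}^* \in X_{\mathbb{G}^*}$ has stabilizer exactly $G^*$, Lemma \ref{preco} supplies precompactness of this quotient, and the relative HP together with the definition of the basic opens $V_{i, A^*}$ turns the orbit map $g \mapsto g \cdot \mathbb{G}^*$ into a $G$-equivariant homeomorphism from that completion onto $X_{\mathbb{G}^*}$. The forward implication (from extreme amenability to universality) is then soft: for any minimal $G$-flow $Y$, extreme amenability of $G^*$ yields a fixed point $y_0 \in Y$, and the right-uniformly continuous, $G^*$-invariant orbit map $g \mapsto g y_0$ extends to a continuous $G$-map $X_{\mathbb{G}^*} \to Y$, necessarily surjective by minimality of $Y$, so $X_{\mathbb{G}^*}$ factors onto every minimal $G$-flow and is therefore universal.

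The converse direction is where I expect the main obstacle. Here one must recover extreme amenability of $G^*$ from the sole assumption that $X_{\mathbb{G}^*}$ is the universal minimal flow of $G$. The standard strategy is to take an arbitrary $G^*$-flow $Z$, induce it up to a $G$-flow (for instance via the greatest $G$-ambit, or by an induced-flow construction applied to $Z$), and then exploit the fact that $G^*$ fixes the distinguished point $\mathbb{G}^* \in X_{\mathbb{G}^*}$, together with the universality of $X_{\mathbb{G}^*}$, to produce a $G^*$-fixed point in $Z$. This part uses a genuine topological-dynamics argument rather than a direct Fra\"{i}ss\'{e}-theoretic manipulation, and it is the one nontrivial ingredient beyond the material already set up in this section; once this Pestov-type correspondence is in hand, the theorem follows by combining it with Theorems \ref{kpt_minim} and \ref{kpt1}.
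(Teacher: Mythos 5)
The paper does not actually prove Theorem \ref{kpt2}: it is imported as a black box from the literature (\cite{KPT}, \cite{NVT}, and Theorem 5.7 of \cite{Z} for the Fra\"{i}ss\'{e}-HP setting), so there is no in-paper argument to compare yours against. Judged on its own terms, your reduction is the standard architecture and the first two paragraphs are essentially right: Theorem \ref{kpt_minim} disposes of the expansion property versus minimality, Theorem \ref{kpt1} applied to $\g^*$ converts ``rigid Ramsey class'' into extreme amenability of $G^*=\aut(\mathbb{G}^*)$, Theorem \ref{iden} and Corollary \ref{ident} identify $X_{\mathbb{G}^*}$ with the completion of $\aut(\mathbb{G})/\aut(\mathbb{G}^*)$, and your derivation of universality from extreme amenability of $G^*$ (fixed point $y_0$ in an arbitrary minimal flow, right-uniformly continuous orbit map $gG^*\mapsto gy_0$ extending to the completion, surjectivity by minimality) is correct.

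The genuine gap is exactly where you flagged it: recovering extreme amenability of $G^*$ from universality of $X_{\mathbb{G}^*}$. The strategy you propose --- take an arbitrary $G^*$-flow $Z$ and ``induce it up to a $G$-flow'' --- does not work as stated for general Polish groups: unlike the locally compact case, there is no induction functor along a closed non-open subgroup producing a $G$-flow that contains $Z$ as a $G^*$-subflow with the adjunction you would need (the co-induced flow can collapse to a point). The actual proofs in \cite{KPT}, \cite{NVT}, \cite{Z} circumvent this. One route is combinatorial: if the Ramsey property fails for some $A^*\in\g^*$, an unavoidable finite coloring of ${\mathbb{G}\choose A}$ is used to manufacture a $G$-flow admitting a minimal subflow that is not a factor of $X_{\mathbb{G}^*}$, contradicting universality; Zucker runs this through the greatest ambit $S(G)$ and ultrafilter methods. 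The other route is the abstract Pestov-type fact that the stabilizer of the distinguished point of a precompact minimal flow that is universal must be extremely amenable, whose proof again passes through $S(G)$ and the retraction onto a minimal subflow rather than through induction of $G^*$-flows. Until the last step of your outline is replaced by one of these arguments, the converse direction is not established.
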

  
 A proof of Theorem \ref{iden} is contained   in Section 3.6 in \cite{BK}.

%Call a map $t\colon X\to Y$, such that $X,Y$ are uniform spaces and a topological group $G$ acts on both $X$ and $Y$  a {\em  uniform $G$-isomorphism}
%if it is a $G$-map which is an isomorphism between the uniform spaces $X$ and $Y$.
Let $\vec{R}^\mathbb{G}$ be such that $\mathbb{G}^*=(\mathbb{G}, \vec{R}^\mathbb{G})$.
 \begin{theorem}\label{iden}
  The map $g\aut(\G^*)\mapsto g\cdot \vec{R}^\mathbb{G}$ from $\aut(\G)/\aut(\G^*)$ to $X_{\mathbb{G}^*}$ is a uniform isomorphism.
%from $\aut(\G)\acts \aut(\G)/\aut(\G^*)$ to $\aut(\G)\acts X_{\mathbb{G}^*} $.
  \end{theorem}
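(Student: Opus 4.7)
The plan is to verify $\Phi(g\aut(\G^*))=g\cdot\vec{R}^\mathbb{G}$ is well-defined, injective, has dense image in $X_{\mathbb{G}^*}$, and is uniformly continuous with uniformly continuous inverse on its image; combined with Lemma \ref{preco} and the compactness of $X_{\mathbb{G}^*}$, this identifies the completion of the right uniform space $\aut(\G)/\aut(\G^*)$ with $X_{\mathbb{G}^*}$, yielding the stated uniform isomorphism.

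For well-definedness and injectivity I would identify the stabilizer of $\vec{R}^\mathbb{G}$ in $\aut(\G)$ with $\aut(\G^*)$. Under the action $\sigma\cdot\vec{R}(\bar a)=\vec{R}(\sigma^{-1}(\bar a))$, an element $\sigma\in\aut(\G)$ fixes $\vec{R}^\mathbb{G}$ precisely when it preserves every relation from $\lan^*\setminus\lan$ on $\mathbb{G}^*=(\mathbb{G},\vec{R}^\mathbb{G})$, which combined with $\sigma\in\aut(\G)$ is equivalent to $\sigma\in\aut(\mathbb{G}^*)$. For density, let $V_{i,A^*}\subseteq X_{\mathbb{G}^*}$ be a nonempty basic open. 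By the Fra\"{i}ss\'{e} property of $\mathbb{G}^*$ there is an embedding $j\colon A^*\to\mathbb{G}^*$, and both $i$ and $j$ are $\lan$-embeddings of $A$ into $\mathbb{G}$; ultrahomogeneity of $\mathbb{G}$ then yields $g\in\aut(\G)$ with $g\circ j=i$, so $g^{-1}\circ i=j\colon A^*\to\mathbb{G}^*$ is an embedding, which is equivalent to $g\cdot\vec{R}^\mathbb{G}\in V_{i,A^*}$.

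Uniform continuity of $\Phi$ itself is a formal consequence of the continuity of the action $\aut(\G)\acts X_{\mathbb{G}^*}$ together with compactness of $X_{\mathbb{G}^*}$: such actions are right-uniformly equicontinuous, hence $g\mapsto g\cdot\vec{R}^\mathbb{G}$ is right-uniformly continuous on $\aut(\G)$ and descends to a uniformly continuous map on the quotient.

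The main obstacle is the uniform continuity of $\Phi^{-1}$, which is where the ultrahomogeneity of $\mathbb{G}^*$ enters. A basic right-uniformity entourage on $\aut(\G)/\aut(\G^*)$ indexed by a finite $F\subseteq\mathbb{G}$ requires the cosets to admit representatives $g',h'$ with $g'(h')^{-1}\rest F=\mathrm{id}$. Suppose $g\cdot\vec{R}^\mathbb{G}$ and $h\cdot\vec{R}^\mathbb{G}$ agree on every atomic $\lan^*$-relation evaluated on tuples from $F$, which is the natural basic entourage in $X_{\mathbb{G}^*}$. Unpacking the definition of the action shows that the bijection $\alpha\colon g^{-1}(F)\to h^{-1}(F)$ defined by $\alpha(g^{-1}(a))=h^{-1}(a)$ is a partial $\lan^*$-isomorphism of $\mathbb{G}^*$. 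By ultrahomogeneity of $\mathbb{G}^*$, $\alpha$ extends to some $k\in\aut(\G^*)$, and a direct computation gives $hk\rest g^{-1}(F)=g\rest g^{-1}(F)$, equivalently $(hk)^{-1}\rest F=g^{-1}\rest F$; thus $g\in g\aut(\G^*)$ and $hk\in h\aut(\G^*)$ witness closeness of $g\aut(\G^*)$ and $h\aut(\G^*)$ in the quotient right uniformity with the same parameter $F$, completing the proof.
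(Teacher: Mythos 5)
Your overall plan is the standard one for this identification (well-definedness and injectivity via the stabilizer computation, density via realizing $A^*$ in $\mathbb{G}^*$ and pulling back with ultrahomogeneity of $\mathbb{G}$, uniform continuity of the orbit map, and uniform continuity of the inverse via extending a partial isomorphism), and it matches the argument the paper delegates to \cite{BK}; the stabilizer, density, and forward uniform continuity steps are all correct as written. There is, however, one genuine gap, precisely at the step you yourself identify as the main obstacle.

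You extend the partial map $\alpha\colon g^{-1}(F)\to h^{-1}(F)$ to an automorphism of $\mathbb{G}^*$ ``by ultrahomogeneity of $\mathbb{G}^*$.'' But in the Fra\"{i}ss\'{e}-HP setting of this paper, ultrahomogeneity is only asserted \emph{relative to the family} $\g^*$: it extends isomorphisms between finite substructures that \emph{belong to} $\g^*$, not between arbitrary finite subsets. This distinction is not cosmetic here, since $\f_P$ and $\f^*_P$ genuinely fail the hereditary property, so for an arbitrary finite $F\subseteq\mathbb{G}$ the induced substructures on $g^{-1}(F)$ and $h^{-1}(F)$ need not lie in $\g^*$, and your $\alpha$ is then merely a partial $\lan^*$-isomorphism to which the hypothesis does not apply. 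The fix is standard but must be said: given the target entourage indexed by $F$, first enlarge $F$ to a finite $F'\supseteq F$ whose induced substructure is some $A\in\g$ (possible because $\mathbb{G}$ is the Fra\"{i}ss\'{e}-HP limit of $\g$); then $g^{-1}(F')$ and $h^{-1}(F')$ carry copies of $A$, and the \emph{relative HP} --- a standing assumption of this section which your argument never invokes, and which is needed exactly here --- guarantees that the induced $\lan^*$-substructures of $\mathbb{G}^*$ on these sets lie in $\g^*$. Your computation shows $\alpha$ restricted to $g^{-1}(F')$ is an isomorphism between these two members of $\g^*$, and only now does ultrahomogeneity of $\mathbb{G}^*$ with respect to $\g^*$ produce $k\in\aut(\mathbb{G}^*)$ extending it; since $F'\supseteq F$, the resulting representatives still witness $F$-closeness of the cosets. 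With this repair (and the analogous remark that $g\cdot\vec{R}^{\mathbb{G}}\in X_{\mathbb{G}^*}$ for every $g$ also uses the relative HP), your proof is complete.
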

  
We will say that flows $G\acts X$ and $G\acts Y$ are {\em isomorphic} if there is a homeomorphism from $X$ onto $Y$ which is a $G$-map.
  \begin{corollary}\label{ident}
  The flow $G\acts \reallywidehat{\aut(\G)/\aut(\G^*)}$ is isomorphic to the flow $G\acts X_{\mathbb{G}^*} $.
  \end{corollary}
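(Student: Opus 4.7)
The plan is to derive this corollary from Theorem \ref{iden} by passing to completions. Theorem \ref{iden} gives a $G$-equivariant uniform isomorphism $\phi\colon g\aut(\G^*)\mapsto g\cdot\vec{R}^\mathbb{G}$ between $\aut(\G)/\aut(\G^*)$ and its image, namely the orbit $G\cdot \vec{R}^\mathbb{G}\subseteq X_{\mathbb{G}^*}$. Since $X_{\mathbb{G}^*}$ is compact (as noted just before Theorem \ref{kpt_minim}, using that $\g^*$ is a reasonable precompact expansion of $\g$), it is complete as a uniform space. Thus, once I know that the orbit $G\cdot\vec{R}^\mathbb{G}$ is dense in $X_{\mathbb{G}^*}$, the universal property of the completion will yield a unique extension of $\phi$ to a uniform isomorphism $\widetilde{\phi}\colon\reallywidehat{\aut(\G)/\aut(\G^*)}\to X_{\mathbb{G}^*}$.

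The key step is therefore the density of the orbit $G\cdot\vec{R}^\mathbb{G}$ in $X_{\mathbb{G}^*}$. A basic nonempty open set has the form $V_{i,A^*}$ for some $A^*\in\g^*$ with $A^*\!\upharpoonright\!\lan=A$ and some embedding $i\colon A\to\mathbb{G}$. By Fra\"{i}ss\'{e}-HP ultrahomogeneity (more precisely, by the fact that every member of $\g^*$ embeds into $\mathbb{G}^*$), there is an embedding $j\colon A^*\to\mathbb{G}^*$; the underlying map is an embedding $j\colon A\to\mathbb{G}$. By ultrahomogeneity of $\mathbb{G}$ with respect to $\g$, there exists $g\in G$ with $g\circ j=i$. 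Using the definition of the $G$-action $g\cdot\vec{R}(\bar{a})=\vec{R}(g^{-1}\bar{a})$, the fact that $j\colon A^*\to\mathbb{G}^*$ is an embedding translates to $i\colon A^*\to(\mathbb{G},g\cdot\vec{R}^\mathbb{G})$ being an embedding, i.e.\ $g\cdot\vec{R}^\mathbb{G}\in V_{i,A^*}$.

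It remains to check that the extension $\widetilde{\phi}$ is a $G$-map. Each $g\in G$ acts by homeomorphisms on both $\reallywidehat{\aut(\G)/\aut(\G^*)}$ (left translation, extended continuously from the dense subspace $\aut(\G)/\aut(\G^*)$) and on $X_{\mathbb{G}^*}$. Since $\phi$ is $G$-equivariant on the dense subspace $\aut(\G)/\aut(\G^*)$ and both actions are continuous, $\widetilde{\phi}\circ g=g\circ\widetilde{\phi}$ holds on a dense set and hence everywhere. This shows that $\widetilde{\phi}$ is an isomorphism of $G$-flows, proving the corollary.

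The only part of this argument that is not purely formal nonsense about completions of uniform spaces is the density of $G\cdot\vec{R}^\mathbb{G}$, which I expect to be the main (though still short) point; once that is established, compactness of $X_{\mathbb{G}^*}$ together with Theorem \ref{iden} and continuity of the actions deliver the result automatically.
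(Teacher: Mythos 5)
Your argument is correct and is essentially the paper's own route: the paper derives the corollary from Theorem \ref{iden} exactly by noting that $X_{\mathbb{G}^*}$ is compact, that the image of the uniform embedding is the orbit $G\cdot\vec{R}^{\mathbb{G}}$, which is dense (via embedding $A^*$ into $\mathbb{G}^*$ and using ultrahomogeneity of $\mathbb{G}$), and then passing to the completion; this is the proof given in Section 3.6 of \cite{BK}, to which the paper refers.
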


\section{The universal minimal flow - construction}
In this section, we show:
\begin{theorem}\label{main}

For any $P\subseteq\{3,\ldots,\omega\}$ there is a reasonable  Fra\"{i}ss\'{e}-HP expansion $\f^*_P$ of $\f_P$, which has the relative HP, the expansion, and the Ramsey properties. In the case when $P$ is finite, this expansion $\f^*_P$ is also precompact.
\end{theorem}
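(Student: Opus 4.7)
The plan is to take $\f_P^*$ to be the expansion of $\f_P$ in which each structure is equipped with a choice, at every vertex $v$ of type $K_p$, of an injection from the set of $A$-neighbors of $v$ into an ``index set'' $S_p$ with $S_p=\{1,\ldots,p\}$ for $p<\omega$ and $S_\omega=\omega$. Equivalently, at each ramification point the expansion fixes a placement of the neighbors inside an order of type $p$; a distinguished root (marked by a unary predicate) is added so that expanded structures are rigid. This is the natural ``planar/convex'' ordered expansion adapted to the fact that ramification orders come from $P$.

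The Fra\"{i}ss\'{e}-HP axioms and precompactness are relatively direct. Joint embedding and amalgamation reduce to amalgamating index-assignments, done by selecting fresh slots at each relevant vertex of the amalgam. Reasonability over $\f_P$ is proved by extending a given expansion of $A$ to a superstructure $B$ vertex by vertex along the elementary construction steps of Remark~\ref{steps}, assigning each new neighbor to any unused slot at its parent vertex---possible since the slot set at a $K_p$-vertex is at least as large as the degree there, and $\omega$ is always unbounded. Relative HP is immediate since the restriction of an injection is again an injection, and the root datum restricts trivially. For precompactness when $P$ is finite, each vertex of $A\in\f_P$ has slot set of size at most $\max P$, so the number of expansions of $A$ is at most $|A|\cdot((\max P)!)^{|A|}<\infty$; by contrast, when $\omega\in P$, a single $K_\omega$-vertex with even one neighbor in $A$ already admits countably infinitely many index assignments, so precompactness necessarily fails, which is precisely the structural feature that will drive the non-metrizability of the universal minimal flow in the infinite case.

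The expansion property is derived from the Ramsey property via a standard argument: given $A^*\in\f_P^*$, use Ramsey on the (per structure, finite) collection of expansions of $A$ to find a $B\in\f_P$ such that every expansion of $B$ induces the expansion $A^*$ on some embedded copy of $A$. The main obstacle is establishing the Ramsey property itself. My approach is to encode finite elements of $\f_P^*$ as planar strong subtrees of a suitably chosen, highly-branching ordered tree and invoke Milliken's tree theorem in its planar/convexly ordered form. The combinatorial care required is to choose the ambient tree's branching so that every ramification type $p\in P$ (including $\omega$ when present) is simultaneously accommodated by the relevant strong subtrees, and to ensure the slot-assignment structure at each vertex is preserved by the pigeonholing. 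Handling the $K_p$-predicates together with the slot bookkeeping is where the bulk of the Ramsey argument lies, and is the main obstacle in the proof.
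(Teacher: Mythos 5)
There is a genuine gap, and in fact an outright error, in your choice of expansion. The theorem asserts precompactness whenever $P$ is \emph{finite as a set}, which includes cases such as $P=\{\omega\}$ or $P=\{3,\omega\}$. Your expansion assigns at each $K_\omega$-vertex an injection of its neighbours into $S_\omega=\omega$, so already a single $K_\omega$-vertex with one neighbour admits infinitely many expansions and precompactness fails for $P=\{\omega\}$ --- contradicting the statement you are trying to prove (and the known fact, due to Duchesne, that $H(W_{\{\omega\}})$ has a metrizable universal minimal flow). Your closing remark that this failure ``drives the non-metrizability in the infinite case'' misdiagnoses the source of non-metrizability: it comes from $P$ containing infinitely many \emph{finite} orders, not from $\omega\in P$. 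The fix, which is what the paper does, is to put slot-injections ($G_i$'s) only at vertices with finite label $p$ (into $\{1,\dots,p-1\}$, reflecting that in the rooted tree a $K_p$-vertex has at most $p-1$ immediate successors), and at $\omega$-labelled vertices to record only a convex linear order of the branches; this keeps the number of expansions of each finite structure finite whenever $P$ is finite.

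Two further points are glossed over. First, ``the root datum restricts trivially'' is false: if $A\subseteq B$ and the root of the expansion $B^*$ does not lie in $A$, the induced minimal point of $A$ may be the \emph{meet} of two minimal elements of $A$, which need not be a vertex of $A$; one must allow the root to be placed on an edge as well as on a vertex, and one needs extra binary relations on $A$ (the paper's $R_p$ and $H_{ij}$) to remember the label of, and the slot data at, that missing meet --- otherwise the relative HP fails (see Remark~\ref{relhp}). Second, the Ramsey property is the heart of the theorem and your proposal only names Milliken's theorem as a hoped-for tool. The embeddings here send edges to paths and preserve meets but impose no condition on endpoints and no level structure, so the strong-subtree framework does not apply off the shelf; the paper instead runs a direct induction on the height of the small structure, combining a Deuber-style one-point argument with Soki\'c's product Ramsey theorem at each internal vertex (splitting into the cases of a finite label, where the branching is rigidly slotted, and the label $\omega$, where an additional finite Ramsey step on increasing injections handles the unordered branching). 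Without carrying out either that induction or a genuine reduction to Milliken, the main claim remains unproved.
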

Then using the Kechris-Pestov-Todorcevic correspondence and Proposition \ref{unispo},  we  obtain a description of the universal minimal flow of the homeomorphism group of the generalized Wa\.zewski dendrite $W_P$, for all finite $P$. In particular, we will obtain that this universal minimal flow is metrizable, when $P$ is finite.
This answers a question of Duchesne asked
during his talk
at the Workshop ``Structure and Geometry of Polish groups'' in
Oaxaca in 06/2017.
In the special case, when $P=\{\omega\}$, the universal minimal flow of $H(W_{\{\omega\}})$, independently  of our work, was identified by Duchesne in~\cite{D2}.

Given $\f_P$ in the language $\lan_P$, we  first construct a family $\T^*_P$ of rooted trees with ordered and labeled branches, and then we construct 
the required family $\f^*_P$ that is Ramsey and has the expansion property with respect to $\f_P$.

{\bf{The family $\mathbf{\T^*_P}$.}}
Take $\lan_{\T^*_P}=\lan_P\cup \{ C, \prec, G_1, G_2\ldots\}$, where $C$ is a ternary relation symbol, $\prec, G_1,G_2,\ldots$
are binary relation symbols. If $P$ is finite, it suffices to take $G_1,\ldots, G_{m-1}$, where
 $m=\max(P\setminus\{\omega\})$.

Let $A\in\f_P$.

\noindent{\bf{Step 1:}} Choosing the root for $A$.

Let $x$ be an edge of $A$ or a vertex of $A$ such that its degree is strictly less than $p$ satisfying $K_p^A(x)$.
In the case when $x$ is a vertex, denote $r=x$ and consider $A$  with the distinguished point $r$, which we call the root. Denote this rooted tree by $T_{A,r}$.
In the case when $x=[a,b]$ is an edge, remove $x$ from $A$, take a new point $r$ and add edges $[a,r]$ and $[r,b]$. The obtained tree with the distinguished 
point $r$, which we call the root, denote  by $T_{A,r}$ as before.

For simplicity, write $T=T_{A,r}$. Similarly as before, we let for $a,b,c,d\in T$, $D^T(a,b,c,d)$ iff the paths $ab$ and $cd$ do not intersect.
Let  for $a,b,c\in T$, $C^T(a,b,c)$ iff $D^T(a,b,c,r)$. 
(The relation $C^T$ ``remembers'' that the root $r$ of $T$ is the smallest with respect to $\leq_T$ element of $T$.)

It is crucial that we are allowed to choose  the root both with respect to edges and with respect to vertices.
Otherwise, the relative HP would fail, see Remark \ref{relhp}.

\begin{remark}
Let $T$ be a tree and let $E$ be the set of endpoints of $T$. Then $C^T\restriction E$ is an example of a $C$-relation on the set $E$, as defined in \cite[Section 10]{AN}. Moreover, $C^T$ on the tree $T$ satisfies (C1)-(C3) in the definition of a $C$-relation, but not (C4).
\end{remark}

\noindent{\bf{Step 2:}} Labeling the root  $r$.

If $r\in A$ (which is exactly in the case when in Step 1 the $x$ we picked was a vertex) then already there is $p\in P$ such that $K^A_p(r)$, i.e. $K^T_p(r)$.
Otherwise, if $r\notin A$, we pick some $p\in P$ and let $K^T_p(r)$.

\noindent{\bf{Step 3:}} Ordering and labeling branches of $T$.

Here we have to do two things: we will 
introduce a binary relation that induces an 
order of branches of $T$, and then for every $a\in T$ such that for a finite $p\in P$ we have $K_p^T(a)$, we will
put additional labels on the successors of $a$.

The binary relation $\prec^T$:  For every $a\in T$ we fix a strict linear order $\prec^T_a$ of  its immediate successors.
Then we let $c\prec^T d$ iff for some $a\in T$
there are $i<j$ such that $a_i\leq_T c$ and $a_j\leq_T d$, where $a_1\prec^T_a\ldots\prec^T_a a_n$ are  immediate successors  of $a$, for some $n$.

The binary relations $G^T_i$:  If $a\in T$ and $p\in P\setminus\{\omega\}$ are such that $K^T_p(a)$, and $a_1\prec^T\ldots\prec^T a_n$ are the immediate successors of $a$,
fix an increasing  injection $k\colon \{1,\ldots, n\}\to\{1,\ldots,p-1\}$. We let for $b\in T$,
$G_{k(i)}^T(a,b)$ iff $a_i\leq_T b$.

Clearly $\prec^T$ induces an ordering of branches of $T$. Moreover, if $\omega\notin P$, then $\prec^T$ can be recovered from $G_1^T, G_2^T,\ldots$.
Note that if $n<p-1$ in the definition of an injection~$k$,  $G_1^T, G_2^T,\ldots$
 carry more information that just $\prec^T$.
The reason why we include  $\prec^T$ rather than just work with $G_1^T,G_2^T,\ldots$ is that in the case when $\omega\in P$ and $P$ is finite, we do not want to work with infinitely many $G_i$'s
(otherwise precompactness will fail); in the case $\omega\notin P$, it suffices to work only with $G_1^T, G_2^T,\ldots$
and not introduce $\prec^T$.

Finally, put into $\T^*_P$ any structure obtained from $A$ (in a very non-unique way) in the procedure described in Steps 1-3. Note that every vertex in a $T^*\in \T^*_P$, except possibly the root,
has the degree different from 2.

{\bf{The family $\mathbf{\f^*_P}$.}}
Take $\lan^*_P=\lan_{\T^*_P}\cup \{ R_p\}_{p\in P}\cup \{H_{ij}\}_{1\leq i<j}$, where each $R_p$ and $H_{ij}$ is a binary relation symbol, and $i,j\in\mathbb{N}$.

Start with $A\in\f_P$ and let $T^*\in\T^*_P$  be any rooted tree obtained from $A$. The universes of $A$ and of $T^*$ either  are equal or there is 
an extra point, the root $r$ of $T^*$, which is not in $A$. All the relations in $\lan_{\T^*_P}$ we simply restrict from $T^*$ to $A$. However,
note that in the case $r\notin A$, we "forgot" this way for which $p\in P$ it holds $K^{T^*}_p(r)$ and for which $1\leq i $ it holds 
$G_i^{T^*}(r,a)$, whenever $a\in T^*$, $a\neq r$. In order to remember these two pieces of information after removing the root, we set
for any two incomparable with respect to~$\leq^{T^*}$ elements $a,b\in A$ and $c$ equal to the meet of $a$ and $b$ in the rooted tree~$T^*$:
$R_p^{A}(a,b)$ iff $K_p^{T^*}(c)$ and we set $H_{ij}^A(a,b)$ iff $G_i^{T^*}(c,a)$ and  $G_j^{T^*}(c,b)$.

\begin{proposition}\label{embeddi}
Let $f\colon S\to T$ be an injection between  finite rooted trees $S$ and $T$ with roots $r_S$ and $r_T$, respectively. Then the following are equivalent:
\begin{enumerate}
%\item $f$ preserves the relation $D$;
\item $f$ preserves the relations $C$ (defined with respect to $r_S$ and $r_T$) and $D$;
\item $f$  preserves the relation $C$;
\item $f$ preserves the meet (i.e. for each $a,b\in S$ and their meet $c$, $f(c)$ is the meet of $f(a)$ and $f(b)$).
\end{enumerate}
\begin{proof}
Clearly (1) implies (2). Assume now (2). First notice that then $f$ preserves $\leq_S$ and $\leq_T$. Then note that
if for some $a,b\in S$ and $c$, the meet of $a$ and $b$, we had that $f(c)$ is strictly lower with respect to $\leq_T$
than the meet of $f(a)$ and $f(b)$, then $\neg C^S(a,b,c)$ and $C^T(f(a),f(b),f(c))$, which is impossible. Therefore
we get (3). Now if we assume (3), then $f$ also preserves  $\leq_S$ and $\leq_T$. Essentially from the 
definitions of the relations $C$ and $D$ it follows that if (3) holds then $f$ preserves $C$ and $D$, and hence
we get (1).

\end{proof}

%\begin{corollary}
%Let $f\colon S\setminus\{r_S\}\to T$ be an injection, where $S$ and $T$ are rooted finite trees with roots $r_S$ and %$r_T$, respectively. Then the following are equivalent:
%\begin{enumerate}
%\item $f$ preserves the relations $C$ (defined with respect to $r_S$ and $r_T$) and $D$;
%\item $f$  preserves the relation $C$.
%\end{enumerate}
%\end{corollary}
%\begin{proof}
%... Why do we preserve $D$?
%\end{proof}

\end{proposition}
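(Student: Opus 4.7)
The implication (1) $\Rightarrow$ (2) is immediate.

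For (2) $\Rightarrow$ (3), I plan to first establish that $f$ preserves the tree order, via the key identity $C^S(x, x, y) \Leftrightarrow x \not\leq_S y$ (and analogously for $T$). This follows by unwinding the definitions: $C^S(x, x, y) = D^S(x, x, y, r_S)$ asserts that the singleton path $\{x\}$ is disjoint from the path $y r_S = \{v : v \leq_S y\}$, i.e., that $x \not\leq_S y$. Applying preservation of $C$ to triples with repeated first argument then yields $a \leq_S b \Leftrightarrow f(a) \leq_T f(b)$. With this in hand, for $a, b \in S$ and $c = a \wedge_S b$ their meet, preservation of $\leq$ gives $f(c) \leq_T f(a) \wedge_T f(b)$; if this were strict, then every vertex $v$ on the path $f(c) r_T$ would satisfy $v \leq_T f(c) <_T f(a) \wedge_T f(b)$, hence would not lie on the path $f(a) f(b)$, giving $C^T(f(a), f(b), f(c))$. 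But $c$ itself lies on the path $ab$ as the meet, so the paths $ab$ and $c r_S$ share the vertex $c$ and $C^S(a, b, c)$ fails---contradicting preservation of $C$. Hence $f(c) = f(a) \wedge_T f(b)$, and (3) holds.

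For (3) $\Rightarrow$ (1), preservation of meets immediately gives preservation of $\leq$ via $a \leq b \Leftrightarrow a \wedge b = a$. Both $C$ and $D$ admit clean descriptions purely in terms of pairwise meets and the order: one checks that $C^S(a, b, c) \Leftrightarrow a \wedge_S c = b \wedge_S c <_S a \wedge_S b$ (a condition that does not even reference the root explicitly), and the path $ab$ is the set $\{v \in S : a \wedge_S b \leq_S v, \ v \leq_S a \text{ or } v \leq_S b\}$, from which $D^S(a, b, c, d)$ is determined by the pairwise meets of the four arguments together with their order relations. Preservation of meets and order therefore forces preservation of both $C$ and $D$.

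The main obstacle is the passage from preservation of $C$ to preservation of $\leq$; once the \emph{diagonal} identity $C(x, x, y) \Leftrightarrow x \not\leq y$ is observed, the remainder amounts to a straightforward unpacking of definitions.
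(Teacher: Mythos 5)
Your proof is correct and follows essentially the same route as the paper's: deduce order-preservation from preservation of $C$, rule out $f(c) <_T f(a)\wedge_T f(b)$ by comparing $C^S(a,b,c)$ with $C^T(f(a),f(b),f(c))$, and recover $C$ and $D$ from meets and the tree order for $(3)\Rightarrow(1)$. You supply useful details the paper leaves implicit, notably the diagonal identity $C(x,x,y)\Leftrightarrow x\not\leq y$ (legitimate here since the paper explicitly allows trivial paths in $D$) and the explicit meet-formula for $C$.
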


\begin{proposition}
The category $\f^*_P$ with embeddings and the category $\T^*_P$ with embeddings  are equivalent via a covariant functor.
\end{proposition}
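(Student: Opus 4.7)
My plan is to define a covariant functor $F\colon\T^*_P\to\f^*_P$ whose action on objects is exactly the recipe preceding the proposition: $F(T^*)$ has universe $A$, carries the restrictions of the $\lan_{\T^*_P}$-relations of $T^*$, and carries the new relations $R_p$, $H_{ij}$ built from meets in $T^*$ and the $K_p$- and $G_i$-labels at those meets. Essential surjectivity onto $\f^*_P$ is immediate from the construction.

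On morphisms, I send an embedding $f\colon T_1^*\to T_2^*$ to the set-theoretic restriction $f\!\restriction\! A_1$. This restriction maps into $A_2$: by Proposition \ref{embeddi}, $f$ preserves meets (and hence $\leq_{T^*}$), so $\leq_{T_2^*}$-minimality of an added root $r_2\notin A_2$ forces any $a\in A_1$ with $f(a)=r_2$ to be $\leq_{T_1^*}$-minimal, hence $a=r_1$, which contradicts $a\in A_1$. That the restriction preserves the new relations $R_p$ and $H_{ij}$ then follows because these are by definition expressible in terms of meets and the $K_p, G_i$-labels at those meets, all data preserved by $f$.

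For fullness, I extend any embedding $g\colon F(T_1^*)\to F(T_2^*)$ in $\f^*_P$ to an embedding $\tilde g\colon T_1^*\to T_2^*$: set $\tilde g=g$ when $r_1\in A_1$, and otherwise let $\tilde g\!\restriction\! A_1=g$ together with $\tilde g(r_1)=r_2$. Injectivity in the latter case requires $r_2\notin g(A_1)$: if $g(y)=r_2$ for some $y\in A_1$, then picking a sibling $z\in A_1$ lying on the other side of $r_1$ in $T_1^*$ (possible since the added root $r_1$ has exactly two children, both in $A_1$) yields $R_p^{A_1}(y,z)$ true while $R_p^{A_2}(r_2,g(z))$ is false (as $r_2$ and $g(z)$ are $\leq_{T_2^*}$-comparable), contradicting $g$'s preservation of $R_p$. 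That $\tilde g$ preserves all $\lan_{\T^*_P}$-relations---including those involving $r_1$---reduces, via the identity $C^A(\cdot,\cdot,\cdot)=D^{T^*}(\cdot,\cdot,\cdot,r)$ and the defining formulas for $R_p$ and $H_{ij}$, to $g$'s preservation of $C^A$, $R_p$, and $H_{ij}$; faithfulness is automatic since morphisms are injective functions.

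The chief technical obstacle I anticipate is the injectivity argument for $\tilde g$ in the case $r_1\notin A_1$, which genuinely uses the structural fact that an added root always has exactly two children, both sitting inside $A$; this is what produces the sibling $z$ needed to derive the $R_p$-contradiction. Once this is in place, the remaining verifications that $\tilde g$ respects the $\lan_{\T^*_P}$-structure are routine applications of Proposition \ref{embeddi} together with unpacking the formulas defining $R_p$ and $H_{ij}$.
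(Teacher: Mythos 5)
Your object-level correspondence is the paper's, and your overall plan (restriction in one direction, re-inserting the root for fullness) mirrors the paper's proof, which runs the functor the other way (from $\f^*_P$ to $\T^*_P$ by adding the root). However, your fullness step contains a genuine error. You extend $g$ by declaring $\tilde g(r_1)=r_2$, the root of $T_2^*$. The correct target --- and the one the paper uses --- is the \emph{meet} of $g(a)$ and $g(b)$ in $T_2^*$, where $[a,b]$ is the edge of $A_1$ with respect to which $r_1$ was added. These differ for most $g$: an embedding $g\colon A_1^*\to A_2^*$ may place its image entirely above some vertex strictly greater than $r_2$ in $\leq_{T_2^*}$, in which case $m$, the meet of $g(a)$ and $g(b)$, satisfies $m>_{T_2^*}r_2$. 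Then $\tilde g$ fails to preserve $C$: $C^{T_1^*}(a,b,r_1)$ is false (the path from $a$ to $b$ in $T_1^*$ passes through $r_1$), while $C^{T_2^*}(g(a),g(b),r_2)=D^{T_2^*}(g(a),g(b),r_2,r_2)$ is true, since the path from $g(a)$ to $g(b)$ stays above $m$ and misses $r_2$. Equivalently, by your own appeal to Proposition \ref{embeddi}, $\tilde g$ must preserve meets, yet $\tilde g$ sends the meet $r_1$ of $a$ and $b$ to $r_2$, which is not the meet of $\tilde g(a)$ and $\tilde g(b)$. Your sibling/$R_p$ argument is fine as far as it goes, but it establishes injectivity of the wrong map. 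Once $r_1$ is sent to $m$ instead, injectivity follows because $D^{A_1}(a,b,c,c)$ holds for every $c\in A_1$ with $c\neq a,b$ and is preserved by $g$, so no point of $g(A_1)$ lies on the path from $g(a)$ to $g(b)$, whereas $m$ does; the relations $R_p$ and $H_{ij}$ then supply exactly the $K_p$- and $G_i$-data at $m$ needed for the extension to be an embedding.

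A secondary gap: in checking that $f\restriction A_1$ maps into $A_2$, your contradiction ``$a=r_1$, which contradicts $a\in A_1$'' only applies when $r_1\notin A_1$. When the root of $T_1^*$ is itself a vertex of $A_1$ (necessarily an endpoint of $A_1$, by the same meet-preservation argument), an embedding in $\T^*_P$ can in principle send it to an added root $r_2\notin A_2$ of degree $2$, since embeddings of these trees need not preserve degrees. This case is not covered by your argument and needs to be handled (or ruled out) separately before the restriction functor is well defined on morphisms.
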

\begin{proof}
To $A^*\in \f^*_P$ assign $T^*\in \T^*_P$ by adding the root if it is not already in $A^*$. 
From the relation $C^{A^*}$ we can recover where the root is.
Recover the information needed about the root
using the relations $R_p$ and $H_{ij}$. To $T^*\in \T^*_P$ assign $A^*\in \f^*_P$ by removing the root if it was added (which is exactly when the degree
of the root is equal to 2).

To an embedding $f\colon A^*\to B^*$ assign an embedding $g\colon S^*\to T^*$, where $S^*$ corresponds to $A^*$ and $T^*$ corresponds to $B^*$
in the following way. If $S^*$ contains the root $r$ which is not already in $A^*$ and this root was added with respect to an edge $x=[a,b]$, we take $g$ to be the 
extension of $f$ in which  $r$ is mapped to the meet of $f(a)$ and $f(b)$. Again the relations $R_p$ and $H_{ij}$ remember all the information needed
for such a $g$ to be an embedding. 
On the other hand, having an embedding $g\colon S^*\to T^*$, we obtain  an embedding $f\colon A^*\to B^*$  by simply removing the root from $S^*$,
in case it was added, and restricting $g$.
\end{proof}

 Let $\T_P$
denote the set of reducts of elements in $\T^*_P$ to the language $\lan_P\cup\{C\}$. We now prove that the 
family $\f^*_P$ has all the  properties required in Theorem \ref{main}.

\subsection{$\f^*_P$ is reasonable} Let $A, B\in\f_P$  such that  $A$ is a substructure of  $B$, be given, and fix an expansion $A^*\in\f^*_P$ of $A$.
We will define $B^*\in\f^*_P$, an expansion of $B$ which when restricted to $A$ is equal to $A^*$.
If the root $r=r_{A^*}$ of $A^*$ is a vertex such that there is no $b\in B\setminus A$ and an edge $[r,b]$ in  $B$,
 we let $r$ to be the root of $B$. If $r$ is a vertex such that there is $b_0\in B\setminus A$ and an edge $[r,b_0]$ in  $B$, we let the vertex of $B$
 to be any endpoint $e$ of $B$ such that $b_0$ belongs to the path $er$ in $B$.
 The resulted rooted tree denote by $T$ and note that $T\in \T_P$.
  If the root of $A^*$ was added with respect to an edge $[x_1,x_2]$ in $A$,
 then take any edge $[y_1,y_2]\subseteq [x_1,x_2]$ in $B$,  and add the root to $B$ with respect to $[y_1,y_2]$.
 %We assume that $y_1$ lies in the path $x_1y_2$ (and $y_2$ lies in the path $y_1x_2$).
 %In this case, if $S^*$ with the root $r_{S^*}$ corresponds to $A$, we 
 Take $T\in\T_P$ equal to $B$ with the root $r_T$ added  with respect to $[y_1,y_2]$
 and let for every $p\in P$, $K_p^T(r_T)$ iff $K_p^{S^*}(r_{S^*})$, where $S^*\in \T^*_P$ with the root $r_{S^*}$ corresponds to $A^*$.
 
% View $T$ as the rooted tree in which the reduct of $S^*$, where we forget the relations $\prec$ and $G_1,\ldots, G_{m-1}$, is embedded.
View $S\in\T_P$ equal to the reduct of $S^*$ as embedded in $T$.
 In the case when the root $r_{S^*}$ is not in $A$, this embedding takes $r_{S^*}$ to $r_T$.
 We still have to define $\prec^T$ and $G^T_1,G^T_2\ldots$, which extend  $\prec^{S^*}$ and 
 $G^{S^*}_1,G^{S^*}_2\ldots$.
 For this, for any $b\in T$ and its immediate successors $b_1,\ldots, b_n$, it is enough to define $\prec^T$ on  $\{b_1,\ldots, b_n\}$ and
 specify for each $i$ and $k$ whether $G^T_i(b,b_k)$ holds or not. 
Let $p\in P$ be such that $K_p^B(b)$. 
 In case $b\notin S^*$, we define  $\prec^T$ and $G^T_1,\ldots, G^T_{p-1}$
 in an arbitrary way that Step 3 in the construction allows us. In the case $b\in S^*$, we define  $\prec^T$ and $G^T_1,\ldots, G^T_{p-1}$
 in any way allowed in Step 3 so that additionally if for some $c\in S^*$, $b_k\leq_T c$ and $G_i^{S^*}(b,c)$ then $G_i^T(b, b_k)$ and if  for some $c,d\in S^*$, 
 $b_k\leq_T c$,  $b_l\leq_T d$ and $c\prec^{S^*} d$
 then $b_k\prec^{T} b_l$.
 This defines $T^*\in\T^*_P$, which corresponds to $B^*\in\f^*_P$ we are looking for.

\subsection{$\f^*_P$ is precompact with respect to $\f_P$} Clear. $P$ has to be finite.

\subsection{$\f^*_P$ has the JEP}
For this we can  instead work with the family $\T^*_P$. Take $S^*, T^*\in\T^*_P$. Let $r_{S^*}$ be the root of $S^*$, and let $r_{T^*}$ be the root of $T^*$.
Pick a new element $r$, pick $p\in P$, and if $p<\omega$ pick $1\leq i<j\leq p-1$. Let $R^*\in\T^*_P$ be obtained as follows. We take the union of $S^*$ and $T^*$
together with the point $r$ and vertices $[r,r_{S^*}]$ and $[r,r_{T^*}]$. We declare $r$ to be the root of $R^*$, i.e. we define
$C^{R^*}(a,b,c)$ iff $D^{R^*}(a,b,c,r) $, and let $K_p^{R^*}(r)$. For any $r_{S^*}\leq_{S^*} a$ and  $r_{T^*}\leq_{T^*} b$, let $a\prec^{R^*} b$.
If $p<\omega$, then if  $r_{S^*}\leq_{S^*} a$, we let $G_i^{R^*}(r,a)$ and 
if  $r_{T^*}\leq_{T^*} a$, we let $G_j^{R^*}(r,a)$. 
We also make sure that the degrees of $r_{S^*}$ and $r_{T^*}$ in $R^*$ are at least 3 by adding
additional edges and extending $\prec^{R^*} $ and $G_i^{R^*}$, if needed.
Then this $R^*$ is as required for $S^*$ and $T^*$.
(Note that in this proof we used that the degree of $r_{S^*}$ in $S^*$ is strictly less than $p_0\in P$ such that $K_{p_0}^{S^*}(r_{S^*})$, and similarly for $T^*$.)

\subsection{$\f^*_P$ has the AP} One can show it directly, but it also follows from the rigidity 
of each $A\in \f^*_P$ together with the JEP and the Ramsey properties for 
$\f^*_P$. The proof of this fact is essentially due to  Ne\v{s}et\v{r}il-R\"odl
(see \cite[p. 294, Lemma 1]{NR}), the framework in which they work is somewhat different from ours.
Their proof is  for families of structures which are rigid, hereditary, have the JEP and Ramsey properties, see also
 \cite[p. 129]{KPT}. %Kechris-Pestov-Todorcevic
Nevertheless, for a  Fra\"{i}ss\'{e}-HP family $\f$, 
whenever $A\in\f$ then   every structure isomorphic to $A$ is also in $\f$. Therefore the proof presented by
Kechris-Pestov-Todorcevic \cite{KPT} applies to
Fra\"{i}ss\'{e}-HP families as well.

\subsection{$\f^*_P$  has the relative HP}  
%Since for any finite subset $X$ of the  Fra\"{i}ss\'{e} limit $\mathbb{F}^*_P$ of $\f^*_P$ there exists
%$B^*\in\f^*_P$ such that $X\subseteq B^*\subseteq \mathbb{F}^*_P$, it suffices to show the following:
%For every $A, B\in\f_P$ such that  $A$ is a substructure of  $B$ and for $B^*\in\f^*_P$, an expansion of $B$,
%we have $B^*\restriction A\in \f^*_P$.

Fix $A, B\in\f_P$ and  $B^*\in\f^*_P$ extending $B$.
Take $T^*\in\T^*_P$ that corresponds to $B^*$ and view $A$ as embedded in the $\lan_P$-reduct of $T^*$.  There are either one or two minimal elements in $A\subseteq (T^*, \leq_{T^*})$. Let $r$ be this minimal element, if there is exactly one, and otherwise let $r$
be the meet of the two minimal elements. Take $S=A\cup\{r\}$, a rooted tree with the root $r$.
% and let for every $p\in P$, $K_p^S(r) $ iff $K_p^{T^*}(r)$.
Let $S^*$ be the substructure of $T^*$ such that the universe is $S$. Then $S^*\in\T^*_P$ and hence the corresponding structure $A^*\in\f^*_P$ satisfies  $B^*\restriction A\in \f^*_P$.

\begin{remark}\label{relhp}
It is possible to have $A, B\in\f_P$, $A$ embedded into $B$, the  expansion $B^*\in\f^*_P$ of $B$,
such that its root was added with respect a vertex, but the root of $A^*$,
the restriction of $B^*$ to $A$,  has the root  added with respect to an edge. 
Let for example $B^*$ be the rooted tree that consists of 4 vertices: $r, a,b, c$, where $r$ is the root, 
and edges $[r,a]$, $[r,b]$, $[r,c]$, and let $A$ be the subtree that consists of 2 vertices $a,b$
and the edge $[a,b]$.
Similarly, it is not hard to give an example of $B^*$ and $A$ such that
$B^*$ has the root added with respect to an edge and $A^*$ has the root added with respect 
to a vertex.
\end{remark}

\subsection{$\f^*_P$  has the expansion property with respect to $\f_P$}
Let $A^*\in\f^*_P$ be given.  Without loss of generality, let the root of $A^*$ belong to $A^*$ (as we can always embed the $A^*$ we started with 
in an element of $\f^*_P$ with such a property). Therefore we can think that $A^*\in\T^*_P$. 
Take a rooted tree $T\in\T_P$
which has the property that all its expansions  to an element in $\T^*_P$
are isomorphic, the degree of the root is $\geq 2$, and $A^*$ embeds in some/every expansion of $T$.
(For this, note that any tree $V\in\T_P$ with the properties: (1) if $x$ and $y$ have the same height, $K_p^V(x)$ and $K_q^V(y)$ hold, then $p=q$;
(2) if $x$ is not an endpoint, $p<\omega$, and $K^V_p(x)$, then $x$ has exactly $p-1$ immediate successors; (3) there is $M\geq 2$ such that every $x$
which is not an endpoint and $K_{\omega}^V(x)$, has exactly $M$ immediate successors; is such that all its expansions  to an element in $\T^*_P$
are isomorphic.)
Finally, let $B$ be obtained as follows. Take $T'$ and $T''$, two disjoint copies of $T$. Denote their roots by $r_{T'}$ and $r_{T''}$, respectively.
The disjoint union of $T'$ and $T''$ together with the edge $[r_{T'}, r_{T''}]$ is a required $B$.
This is because whenever we expand $B$ to a $B^*\in\f^*_P$ then we can embed $T^*$
(the unique expansion of $T$) into $B^*$. 
%The two copies of $T$ I have there I denote by $T'$ and $T''$.
If, say, the vertex or edge with respect to which is added the root of $B^*$ lies in $T'$, then the unique expansion of $T''$ embeds into $B^*$.

\subsection{$\f^*_P$  has the Ramsey property} 
We generalize the Ramsey theorems by Deuber \cite{D} and by Soki\'c \cite{S} (Theorem 2.2 and Theorem 6.1). For related Ramsey theorems, where it is additionally assumed that endpoints of a rooted tree are mapped to endpoints, see \cite{J}, \cite{BP}, and \cite{Sol}.
\begin{theorem}\label{ramsey}
For any non-empty $P\subseteq\{3,\ldots,\omega\}$, the family $\T^*_P$, and hence the family $\f^*_P$, is Ramsey.
\end{theorem}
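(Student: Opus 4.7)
The plan is to establish the Ramsey property for $\T^*_P$ by induction, adapting the Deuber--Soki\'c approach cited in the paper to accommodate the extra labels $K_p$, $\prec$, and $G_i$ present in our family. Fix $A^*, B^* \in \T^*_P$ with $A^* \leq B^*$ and an integer $r \geq 2$; I wish to produce $C^* \in \T^*_P$ such that every $r$-coloring of ${C^* \choose A^*}$ admits $h \in {C^* \choose B^*}$ with $\{h \circ f : f \in {B^* \choose A^*}\}$ monochromatic.

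First I would observe that the data of a $T^* \in \T^*_P$ is a finite rooted tree together with per-vertex finite information: a $K_p$-label, a linear order $\prec^{T^*}_v$ on the immediate successors of each vertex $v$, and, for $v$ with finite label $K_p^{T^*}(v)$, an increasing injection of those successors into $\{1,\ldots,p-1\}$ encoded by the $G_i$'s. Embeddings preserve all of this data exactly, so each $T^*$ is rigid and the Ramsey problem has no hidden symmetries to quotient by; this is why one expects Deuber--Soki\'c type arguments to work.

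I would then proceed by induction on $ht(B^*)$, with the base case $ht(B^*)=0$ (a labeled root) trivial. For the inductive step, list the immediate successors of the root $r_B$ of $B^*$ as $b_1 \prec \cdots \prec b_k$ and let $B_i^*$ be the subtree of $B^*$ rooted at $b_i$. By induction, for each $i$ obtain a Ramsey witness $C_i^* \in \T^*_P$ that handles $B_i^*$ against every $A' \leq B_i^*$ that could arise as the `subtree at $b_i$' part of an embedding $A^* \hookrightarrow B^*$. Build $C^*$ by placing a root $r_C$ with the same $K_p$-label as $r_B$ and attaching beneath $r_C$ sufficiently many disjoint copies of each $C_i^*$, in an appropriate order, with the $\prec^{C^*}$ and $G_i$ data at $r_C$ chosen so that every copy of $B^*$ with $r_B \mapsto r_C$ can be read off.

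The main obstacle will be handling embeddings $A^* \hookrightarrow C^*$ whose image straddles several of the copies attached under $r_C$: such embeddings necessarily pass through $r_C$ and distribute the subtrees of $A^*$ among the slots beneath $r_C$. The finitely many distribution patterns, together with the finitely many $\prec$- and $G_i$-configurations at $r_C$, can be absorbed into a preliminary product coloring, after which a Graham--Rothschild / Hales--Jewett parameter-word argument at the root level yields a monochromatic pattern of slots; the inductive hypothesis applied within each chosen copy $C_i^*$ then upgrades this to a monochromatic copy of $B^*$. Embeddings of $A^*$ lying entirely within a single $C_i^*$ are absorbed by the inductive hypothesis for that $C_i^*$, together with a standard product Ramsey step over the many copies. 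Since every $A^* \in \f^*_P$ is (by the equivalence of categories) effectively some $T^* \in \T^*_P$, the Ramsey property transfers immediately from $\T^*_P$ to $\f^*_P$.
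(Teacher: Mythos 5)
Your plan contains two genuine gaps, and the second one is fatal to the construction as described. You build $C^*$ by placing a single root $r_C$ (labelled like $r_B$) and attaching copies of the witnesses $C_i^*$ beneath it, and you only intend to read off copies of $B^*$ with $r_B\mapsto r_C$. But then every such copy of $B^*$ contains the same fixed occurrences of $A^*$ passing through $r_C$ (for instance, when $A^*$ is a single labelled point matching the label of $r_B$, the vertex $r_C$ itself is a copy of $A^*$ lying in every candidate copy of $B^*$). An adversary who colors the copies of $A^*$ through $r_C$ red and all others blue defeats your $C^*$ whenever $B^*$ contains a copy of $A^*$ at its root and another one elsewhere. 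The monochromatic copy of $B^*$ must in general be allowed to sit deep inside $C^*$, and deciding \emph{where its root goes} is itself a nontrivial Ramsey problem. This is exactly what the paper's proof isolates: the base case of its induction is the one-point structure $S=\{a\}$, handled by iterating the $V[T]$ construction and a ``blue embedding'' recursion that descends into the attached copies when the color at the current root is wrong; and in the inductive step the auxiliary tree $V$ is chosen in advance as a Ramsey witness for colorings of the possible images of $r_S$, so that the final copy of $T$ can be rooted at whichever vertex of $V$ the root-coloring dictates.

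The first, more structural, gap is that you induct on $ht(B^*)$, the height of the \emph{large} structure, whereas the argument needs an induction on the height of the \emph{small} one. Your treatment of the straddling embeddings requires the product Ramsey theorem applied to the tuple of subtrees of $A^*$ below its root against tuples of slots. Soki\'c's product theorem (and the standard two-colors-to-many-colors bootstrap inside its proof) needs each coordinate $A_j^*$ to be a Ramsey object against targets of \emph{arbitrary} height, because the intermediate structures produced in the iteration grow without bound; your inductive hypothesis only provides witnesses for pairs whose second coordinate has height $<ht(B^*)$, which is too weak to invoke it. The paper's induction is on $ht(S)$ precisely so that the subtrees $S_1,\dots,S_k$ of $S$ below $r_S$ have strictly smaller height and are therefore full Ramsey objects by the inductive hypothesis, making the product Ramsey theorem available; the interference between overlapping slot-tuples, which you compress into ``can be absorbed into a preliminary product coloring,'' is then resolved by iterating the product Ramsey theorem once per distribution pattern in the recursive construction of the trees $V^x$. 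Your identification of the three ingredients (product Ramsey within slots, a finite Ramsey theorem on increasing injections for $\omega$-branching at the root, and recursion on subtrees) is on target, but the induction must be reorganized around the small structure and the root-position coloring before the pieces fit together.
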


Consider $T\in\T^*_P$ with the root $r_T$ and let $q$ be such that $K^T_q(r_T)$. 
Let $V\in\T^*_P$ 
and let  $M$ be the maximum of 2 and 
the number of immediate successors of all vertices in $V$ labeled with $\omega$.
% and take $q'=q-1 $ if $q<\omega$ and $q'=M$ if $q=\omega$.
We are going to define  $V[T]\in T^*_P$. First consider $V'\in\T^*_P$ defined as follows.
For every endpoint $e\in V$ take $p_e$ such that $K^V_{p_e}(e)$ and take new points $x_1^e, \ldots, x_{p'_e}^e$,
 where $p'_e=p_e-1$  if $p_e<\omega$ and $p'_e=M$ if $p_e=\omega$, and add edges $[e, x_i^e]$, $i=1,\ldots, p'_e$.
 Then let $V'\in \T^*_P$ be the tree we obtain by letting $K^{V'}_q(x^e_i)$ for each endpoint $e$, and each $i$,
 and (uniquely) choosing  $\prec^{V'}$ and
 $G_i^{V'}$. 
To obtain $V[T]$, to each endpoint of $V'$ attach the tree $T$ by identifying this endpoint with the root $r_T$. 
%The tree we obtained denote by $V[T]$, and note that $V[T]\in\T^*_P$.

\begin{example}
Let $V=2^{\leq 1}$ with $K_5^V(\emptyset)$, $K_3^V(0)$, and $K_\omega^V(1)$. 
Let $T=2^{\leq 1}$ with $K_7^T(\emptyset)$, $K_{10}^T(0)$, and $K_6^T(1)$. 
Then $S=V[T]=2^{\leq 3}$ with  $K_5^S(\emptyset)$, $K_3^S(0)$,  $K_\omega^S(1)$,
$K_7^S(00)$, $K_7^S(01)$, $K_7^S(10)$, $K_7^S(11)$, $K_{10}^S(000)$, 
$K_{10}^S(010)$, $K_{10}^S(100)$,  $K_{10}^S(110)$, 
$K_{6}^S(001)$, $K_{6}^S(011)$, $K_{6}^S(101)$, and $K_{6}^S(111)$. 

\end{example}

For a family $\g$ of first-order structures in some language denote by $\g_m$
the family $\{(A_1,\ldots, A_m)\colon A_j\in\g\}$. We say that $(A_1,\ldots, A_m)$ embeds into
$(B_1,\ldots, B_m)$ if for every $j$, $A_j$ embeds into $B_j$.

In the inductive step of  the proof of Theorem \ref{ramsey} we will be using the product Ramsey theorem.
\begin{theorem}[Soki\'c, Theorem 2 in \cite{S2}]
Let $\g$ be a family of first-order structures in some language, which is a Ramsey class.
For any $(A_1,\ldots, A_m), (B_1,\ldots,B_m)\in\g_m$ 
such that $(A_1,\ldots, A_m)$ embeds into $(B_1,\ldots,B_m)$ there is $C\in\g$ such that
for any coloring of embeddings  of $(A_1,\ldots, A_m)$ in $(C,\ldots, C)$ into finitely many colors there is a embedding $h=(h_1,\ldots,h_m)$ of $(B_1,\ldots,B_m)$ in $(C,\ldots, C)$
 such that the set of all functions $h\circ f$, where $f=(f_1,\ldots, f_m)$ is an embedding of $(A_1,\ldots, A_m)$  into $(B_1,\ldots,B_m)$, is monochromatic.
 \end{theorem}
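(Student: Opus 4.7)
The plan is to induct on the number of components $m$.

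The base case $m = 1$ is precisely the assumed Ramsey property of $\g$. For the inductive step, fix $(A_1,\ldots,A_{m+1}), (B_1,\ldots,B_{m+1}) \in \g_{m+1}$ with coordinatewise embeddings and $r \geq 2$ colors. First, apply the inductive hypothesis to the truncated tuples $(A_1, \ldots, A_m)$ and $(B_1, \ldots, B_m)$ with $r$ colors, obtaining $C_0 \in \g$ that witnesses the product Ramsey property for these. Let $N$ be the number of embeddings of $(A_1, \ldots, A_m)$ into $(C_0,\ldots,C_0)$. Next, apply the Ramsey property of $\g$ (the $m=1$ case) to $A_{m+1} \leq B_{m+1}$ with $r^N$ colors to obtain $C_1 \in \g$ with $C_1 \to (B_{m+1})^{A_{m+1}}_{r^N}$. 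Amalgamate $C_0$ and $C_1$ by joint embedding into a single $C \in \g$, fixing embeddings $\iota_0 \colon C_0 \hookrightarrow C$ and $\iota_1 \colon C_1 \hookrightarrow C$.

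Given any $r$-coloring $c$ of the embeddings of $(A_1,\ldots,A_{m+1})$ into $(C,\ldots,C)$, assign to each $f_{m+1} \in {C_1 \choose A_{m+1}}$ its ``signature'' $\chi(f_{m+1})$: the map sending a tuple $(f_1,\ldots,f_m)$ of embeddings of $(A_1,\ldots,A_m)$ into $(C_0,\ldots,C_0)$ to $c(\iota_0 \circ f_1,\ldots,\iota_0 \circ f_m, \iota_1 \circ f_{m+1}) \in \{1,\ldots,r\}$. This is an $r^N$-coloring of ${C_1 \choose A_{m+1}}$, so the Ramsey choice of $C_1$ yields $h_{m+1} \in {C_1 \choose B_{m+1}}$ for which every element of $h_{m+1} \circ {B_{m+1} \choose A_{m+1}}$ shares a common signature $\chi_0$. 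Viewing $\chi_0$ as an $r$-coloring of embeddings of $(A_1,\ldots,A_m)$ into $(C_0,\ldots,C_0)$, the property of $C_0$ supplies $(h_1,\ldots,h_m)$ monochromatizing $\chi_0$ for $(B_1,\ldots,B_m)$. Then the tuple $(\iota_0 \circ h_1,\ldots,\iota_0 \circ h_m, \iota_1 \circ h_{m+1})$ is the desired monochromatic embedding of $(B_1,\ldots,B_{m+1})$ into $(C,\ldots,C)$.

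The main obstacle is the amalgamation step: for a general Ramsey class the joint embedding property is not automatic and must be established separately, either from Fra\"\i ss\'e-type hypotheses implicit in the setting of \cite{S2} or via a rigidity-plus-JEP argument in the style of Ne\v{s}et\v{r}il--R\"odl, paralleling the derivation of the AP for $\f^*_P$ above. Once JEP is secured, tracking the exponential color blowup $r \mapsto r^N$ across the inductive steps is routine bookkeeping.
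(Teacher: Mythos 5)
Your proof is correct, and the comparison here is somewhat vacuous: the paper does not prove this theorem at all, but imports it from Soki\'c \cite{S2}. Your induction is essentially the standard argument behind such product Ramsey statements: the base case is the Ramsey property itself, and the inductive step is the signature (fibering) trick, with the quantifiers in the right order ($C$ is assembled from $C_0$ and $C_1$ before the coloring is revealed) and the color blow-up $r\mapsto r^N$ harmless because a Ramsey object is one for every finite number of colors. Your worry about the joint embedding step is well placed and is a genuine hypothesis, not mere bookkeeping: the statement as literally quoted is false for a Ramsey class that is not directed under embeddings (take the union of two Ramsey classes in disjoint unary languages with no cross-embeddings; then no $C\in\g$ receives both $B_1$ and $B_2$, so the required $h$ cannot exist, even though each component is Ramsey), so the JEP must be part of the ambient hypotheses in \cite{S2}. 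In the present paper this costs nothing, since the class $\T^*_P$ to which the theorem is applied is explicitly shown to have the JEP. One further point in your favor: your induction invokes the Ramsey property only for the individual structures $A_{m+1}$ and, recursively, for the truncated tuples, never for the whole class; it therefore yields exactly the local refinement the paper records immediately after the theorem statement --- if every $A\in\g$ with $ht(A)\leq i$ is a Ramsey object, then every tuple of such structures is a Ramsey object in $\g_m$ --- which is precisely the form needed in the inductive step of the proof of Theorem \ref{ramsey}, where only trees of height below $ht(S)$ are yet known to be Ramsey objects.
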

 
 Moreover, from the proof of Soki\'c's theorem it follows that for every $i$:  If  every $A\in\g$ 
 with $ht(A)\leq i$ is a Ramsey object in $\g$, then every $(A_1,\ldots, A_m)\in\g_m$
 with each $A_j$ satisfying $ht(A_j)\leq i$, is a Ramsey object in $\g_m$.

We show that $\T^*_P$ is a Ramsey class , i.e. we show that for every
 $S,T\in \T^*_P$ with $S\leq T$ there exists $U\in \T^*_P$ such that for every coloring $c: {U \choose S} \to\{\mathrm{blue}, \mathrm{red}\}$ there exists $h\in {U \choose T}$ such that 
$\{ h\circ f: f\in {T \choose S} \}$ is monochromatic.

\begin{proof}[Proof of Theorem \ref{ramsey}]
We show that every $S\in\T^*_P$ is a  Ramsey object by induction on the height of $S$.
First let $S\in\T^*_P$ be a one-element structure. Take $T\in\T_P$ such that $S$ embeds into $T$. 
Without loss of generality, for any $x\in T$, which is not an endpoint, and $p\in P$ such that $K^T_p(x)$, if $p<\omega$, then the number of immediate successors of $x$ is equal to $p-1$. Moreover assume that there is $M$ such that the for any $x$, which is not an endpoint, such that $K^T_{\omega}(x)$, the number of immediate 
successors of $x$ is exactly $M$.
Suppose that $S=\{a\}$ and let $p_S\in P$ be such that 
$K^S_{p_S}(a)$.

Let $T_0=T$ and  $T_{k}=T_{k-1}[T]$, $1\leq k\leq h=ht(T)$, and we claim that $U=T_{h}$ is as required.
Denote the set of copies of $T$ attached to $T_{k-1}$ in the construction of $T_k$ by $\T_k$, and let $\T_0=\{T_0\}$.
%Observe that $T_k$ has height $(k+1) ht(S)$ and that  $T_{k+1}\rest ((k+1) ht(S)) = T_k$.
Color embeddings of $S$ into $U$ into two colors: blue and red.
If there is $k$ and $T'\in\T_k$ such that all embeddings of $S$ into $T'$ are in the same color, we are done.
Otherwise, for each $k$ and $T'\in\T_k$ there is a blue embedding of $S$ into $T'$.

We construct the required embedding $f$ of $T$ into $U$ by induction. First we construct $f(r_T)$, where  $r_T$ denotes the root of $T$.
Let $p_T\in P$ be such that $K^T_{p_T}(r_T)$. If $p_T\neq p_S$, let $f(r_T)=r_U$, where $r_U$ is the root of $U$.
If $p_T=p_S$, let $f(r_T)$ be an image of any  blue embedding of $S$
 into $T'=T_0\in \T_0$.
Now let $x\in T$ be of height $k$ and suppose that we constructed  $f(x)$ and that $f(x)\in T'$ for some $T'\in \T_k$
 Let $x_1\prec^T\ldots\prec^T x_m$
be the list of immediate successors of $x$, and we construct $f(x_1),\ldots, f(x_m)$, each will be in a copy of $T$ that lies in $\T_{k+1}$.
 Suppose that $p\in P$ is such that
$K_{p}^T(x)$ and that $y_1\prec^U\ldots \prec^U y_{p'}$ is the list of immediate successors of $f(x)$, where $p'=p-1$
 if $p<\omega$ and $p'=M$ when $p=\omega$.
 By the construction of $U$, there are $r_1\prec^U\ldots \prec^U r_{p'}$,
such that $r_l$ is a successor of $y_l$ in $U$ and $r_l$ is the root of some $T^l\in\T_{k+1}$.
For each $l$, let $p_l\in P$ be such that $K^T_{p_l}(x_l)$, and if $p_l\neq p_S$, let $f(x_l)$ be equal to the point in $T_l$ corresponding to $x_l$
in the obvious isomorphism between $T$ and $T^l$. Otherwise, if $p_l= p_S$, we let $f(x_l)$ to be the image of a blue embedding of $S$ into  $T^l\subseteq U$.
This gives a ``blue'' embedding of $T$ into $U$ and
 finishes the base step of the induction.

For the inductive step,
let  $S,T\in\T^*_P$ such that $S\leq T$ be given. We assume that  every  tree in $\T^*_P$ of the height strictly less than the height of $S$ is a Ramsey object.
Let $V\in\T^*_P$ be such that whenever we color embeddings of $\{r_S\}$ into $V$ into two colors,
then there exists an embedding $g\colon T\to V$  such that 
$\{ g\circ f\colon f\in {T \choose S} \}$ is monochromatic. 
Without loss of generality, we assume that  for any $x\in V$, which is not an endpoint, and $p\in P$ such that $K^V_p(x)$, if $p<\omega$, then the number of immediate successors of $x$ is equal to $p-1$. 
Let $a_1\prec^S\ldots \prec^S a_k$ be the list of immediate successors of $r_S$, and let $S_i=S_{a_i}=\{b\in S\colon a_i\leq_S b\}$.

Using the well-founded recursion along $V$, for each $x\in V$ we construct a tree $V^x\in\T^*_P$. The  $U=V^{r_V}$ 
 will be as needed for $S$ and $T$ and two colors.
For an endpoint $x\in V$, let $V^x=\{x\}$ with $K_p^{V^x}(x)$ iff $K_p^V(x)$ for every $p\in P$. 
Now let $x\in V$ not be an endpoint, let $x_1\prec^V\ldots\prec^V x_n$ be the list of all immediate successors of $x$, and assume that we already defined
 $V^{x_1},\ldots, V^{x_n}$. Let $V^x_0$ be obtained from the disjoint union of $\{x\}$ and $V^{x_1},\ldots, V^{x_n}$,  adding edges $[x, r_{V^{x_i}}]$.
 For $a\in V^{x_i}$ and $t=1,\ldots,n$ we let
$G^{V^x_0}_t(x,a)$ iff $G^V_t(x, x_i)$. Similarly, for $a\in V^{x_i}$ and $b\in V^{x_j}$ we let $a\prec^{V^x_0} b$ iff $x_i\prec^V x_j$ and let $K_p^{V^x}(x)$ iff $K_p^V(x)$.

If $S$ does not embed into $V^x_0$ in a way that $r_S$ is mapped to $r_{V^x_0}$, 
the root of $V^x_0$, let $V^x=V^x_0$.
 Otherwise, and if $p$ such that $K^V_p(x)$ is finite, apply the product Ramsey theorem to $(S_1,\ldots, S_k)$ and $(V^{x_{b(1)}},\ldots, V^{x_{b(k)}})$,
 where $b\colon\{1,\ldots,k\}\to \{1,\ldots,n\}$ is an increasing injection such that for any $t$ and $i$, $G_t^S(r_S,a_i)$ iff $G_t^V(x,x_{b(i)})$,
 and let $U^x\in\T^*_P$ be the structure we obtain from the product Ramsey theorem. 
 For each $j\in\rm{rng}(b)$, take any $U^{j}\in \T^*_P$ such that $U^x$ embeds into it and for any $p\in P$, $K^{U^{j}}_p(r_{U^{j}})$ iff
 $K^{V^{x_j}}_p(r_{V^{x_j}})$. Finally, let $V^x$ be equal to $V^x_0$ with each $V^{x_{b(i)}}$ replaced by $U^{b(i)}$.
 If $K^V_{\omega}(x)$, 
take $l$ such that whenever we color increasing injections of  $\{1,\ldots, k\}$ to $\{1,\ldots, l\}$ into two colors then there is an increasing injection $g\colon\{1,\ldots, n\}\to \{1,\ldots, l\}$ such that all maps $g\circ f$, where $f\colon \{1,\ldots, k\}\to \{1,\ldots, n\}$  is an increasing injection, 
  are in the same color.
  Let $U_0^x\in\T^*_P$ be any structure that all $V^{x_1},\ldots, V^{x_n}$ embed into it. 
  Enumerate increasing injections of  $\{1,\ldots, k\}$ to $\{1,\ldots, l\}$ into $e_1,\ldots, e_m$.
Define recursively  $U_{i+1}^x$, $i=0,\ldots, m-1$ to be  the result of applying the product Ramsey theorem to $(S_1,\ldots, S_k)$ and $(U_i^x,\ldots, U_i^x)$.
Define   $V^x$ to be  the disjoint union of $\{x\}$ and $l$ many $U^x_m$,
we add edges $[x, r_{U^x_m}]$, and specify $\prec^{U^x_m}$, and let $K_p^{V^x}(x)$ iff $K_p^V(x)$.

Observe that for every $x\in V$ with  immediate successors  $x_1\prec^{V}\ldots\prec^{V} x_n$, which is not an endpoint, we have  
 for every $p\in P$,  $K_p^V(x)$ iff $K_p^{V^x}(r_{V^x})$. If $p$ is finite and such that  $K_p^V(x)$ 
or if $S$ does not embed into $V^x$ in a way that $r_S$ is mapped to $r_{V^x}$, then $x$ has exactly $n$ many immediate successors
 $x'_1\prec^{V^x}\ldots\prec^{V^x} x'_n$ in $V^x$ and they are such that for any $t$ and $i$, $G^{V^x}_t(r_{V^x}, x'_i)$ iff $G^{V}_t(x,x_i)$,
 and for any $p$ and $i$, $K_p^{V^x}(x'_i)$ iff $K_p^V(x_i)$. Moreover, for any coloring into two colors of embeddings of $S$ into $V^x$
 such that  $r_S$ is mapped to $r_{V^x}$, there is an embedding $g\colon V^x_0\to V^x$ taking $r_{V^x_0}$ to $r_{V^x}$ such that
 $\{ g\circ f\colon f\in {V^x_0 \choose S} \text{ taking $r_S$ to } r_{V^x_0} \}$ is monochromatic. 
 If $K_{\omega}^V(x)$, set $V^i=\{a\in V^x_0\colon x_i\leq_{V^x_0} a\}$, where $x_1\prec^{V^x_0}\ldots\prec^{V^x_0} x_n$ are the immediate successors of $x$ in $V^x_0$.
 Then for any coloring into two colors of embeddings of $S$ into $V^x$ such that  $r_S$ is mapped to $r_{V^x}$,
 there are immediate successors  $y_1\prec^{V^x}\ldots\prec^{V^x} y_n$ of $r_{V^x}$ and 
 an embedding $g\colon V^x_0\to V^x$ taking $r_{V^x_0}$ to $r_{V^x}$ and satisfying $y_i\leq_{V^x} g(V^i)$, $i=1,\ldots, n$,
 such that
 $\{ g\circ f\colon f\in {V^x_0 \choose S} \text{ taking $r_S$ to } r_{V^x_0} \}$ is monochromatic.

 Color embeddings  of $S$ into $U$ into two colors. Using the observations above, find an embedding $h\colon V\to U$ such that any two 
 embeddings  $g_1,g_2\colon S\to U$ whose image is contained in $h(V)$ and with $g_1(r_S)=g_2(r_S)$,
 are in the same color. Finally, by the choice of $V$, for the induced coloring of
 embeddings of $\{r_S\}$ into $V$ into two colors,
there exists an embedding $g\colon T\to V$ such that
 $\{ g\circ f\colon f\in {T \choose S}\}$ is monochromatic.
 Then $h\circ g$ is as required.

\end{proof}

We finish this section relating Theorem \ref{ramsey} to the work of Soki\'c  \cite{S}.

A {\em semilattice} is a poset such that every 2 elements have an infimum. If $A$ is a semilattice, we define a binary operation $\circ$ on $A$
by $a\circ b=\inf(a,b)$ and a partial order $\leq_A$ by $a\leq_A b$ iff $a\circ b=a$.
Say that $(A,\circ^A)$ is a {\em treeable semilattice} if  the induced poset is a rooted tree, i.e. it has the minimum, called the root,
and for each $a\in A$, the set $\{b\in A\colon b\leq_A a\}$ is linearly ordered by $\leq_A$.

Let $\T$ be the family of all finite treeable semilattices in the language $\{\circ\}$.
Let $A\in\T$ and say that $\preceq^A$ is a {\em convex ordering} on $A$ if for every $a,b,c\in A$ with $a\circ b=c$, $a\neq c$ and $b\neq c$,
we have $a\preceq^A b$ iff $a'\preceq^A b'$, where $a',b'$ are immediate successors of $c$, $a'\leq_A a$ and $b'\leq_A b$. Denote the set of all convex
ordering on $A$ by $co(A)$ and let 
\[ \mathcal{CT}=\{(A,\circ^A,\preceq^A)\colon (A,\circ^A)\in \T, \preceq^A\in co(A)\}.\]

\begin{theorem}[Soki\'c, Theorem 2.2 in \cite{S}]
$\mathcal{CT}$ is a Ramsey class.
\end{theorem}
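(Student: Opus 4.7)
The plan is to prove that every $S\in\T^*_P$ is a Ramsey object by induction on the height of $S$. Since the categories $\f^*_P$ and $\T^*_P$ are equivalent via a covariant functor (which I established above), establishing the Ramsey property for $\T^*_P$ automatically gives it for $\f^*_P$. The key combinatorial operation is the grafting construction $V\mapsto V[T]$: attach a fresh copy of $T$ at each endpoint of a suitably padded version of $V$, identifying that endpoint with the root of the attached copy. Iterating this operation will provide the Ramsey objects we need.

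\textbf{Base case.} Suppose $S$ is a single vertex carrying a label $K_p$, and let $T\in\T^*_P$ contain $S$. I would normalize $T$ so that every non-endpoint $x$ with $K^T_p(x)$ has exactly $p-1$ immediate successors when $p<\omega$, and exactly some fixed $M$ immediate successors when $p=\omega$. Set $T_0=T$ and $T_k=T_{k-1}[T]$ for $1\le k\le h=ht(T)$, and let $U=T_h$; denote by $\T_k$ the family of copies of $T$ introduced at stage $k$. Given a two-coloring of ${U\choose S}$, either there is some layer $k$ and some $T'\in\T_k$ with all $S$-embeddings monochromatic (and we are done), or at every layer a blue embedding of $S$ exists inside every copy of $T$. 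In that case I would construct a monochromatic embedding of $T$ into $U$ by a greedy descent: map $r_T$ into a blue embedding of $S$ inside $T_0$ (matching labels), and at each inductive step, for a vertex $x$ already placed in some $T'\in\T_k$, send its immediate successors into deeper-lying copies $T^l\in\T_{k+1}$ rooted at the successors of $f(x)$, taking a blue $S$-embedding inside $T^l$ whenever the label of the successor of $x$ agrees with $p$.

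\textbf{Inductive step.} Assume all trees of height strictly less than $ht(S)$ are Ramsey objects in $\T^*_P$. Let $a_1\prec^S\cdots\prec^S a_k$ be the immediate successors of $r_S$ and let $S_i=\{b\in S\colon a_i\le_S b\}$; by induction each $S_i$ is Ramsey. I would then apply the base case to $\{r_S\}$ to obtain an auxiliary $V\in\T^*_P$ whose $T$-copies absorb any two-coloring of root-embeddings. Next, by well-founded recursion along $V$, I would build $V^x\in\T^*_P$ for each $x\in V$ by taking disjoint unions of the $V^{x_i}$ attached below $x$ and, whenever $S$ can actually be embedded so that $r_S$ maps to $r_{V^x}$, applying the product Ramsey theorem of Sokić to $(S_1,\ldots,S_k)$ against $(V^{x_{b(1)}},\ldots,V^{x_{b(k)}})$ to absorb colorings of root-fixing $S$-embeddings inside $V^x$. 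Then $U=V^{r_V}$ is the Ramsey witness: a two-coloring of ${U\choose S}$ yields a $V$-embedding $h$ that is constant on the label of root positions, and the defining property of $V$ then produces a $T$-embedding $g$ with monochromatic $S$-images, so that $h\circ g$ is the required embedding.

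\textbf{Main obstacle.} The delicate case is $K^V_\omega(x)$, where $x$ can carry unboundedly many immediate successors and the product Ramsey theorem cannot be applied in one shot. Here the plan is first to choose $l$ large enough that any two-coloring of the increasing injections $\{1,\ldots,k\}\to\{1,\ldots,l\}$ admits a monochromatic extension on every image of an increasing injection $\{1,\ldots,n\}\to\{1,\ldots,l\}$ (a finite classical Ramsey fact), then to enumerate those injections $e_1,\ldots,e_m$ and iterate the product Ramsey theorem once per $e_i$, each time replacing the current gadget by its Ramsey enlargement. The real technical weight of the proof concentrates in verifying that all these recursive constructions respect the relations $\prec$, $G_i$, $C$, $R_p$, and $H_{ij}$ so that the resulting structures genuinely sit in $\T^*_P$; the rest is bookkeeping.
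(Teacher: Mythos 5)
Your proposal is correct and follows essentially the same route as the paper: the paper obtains Soki\'c's theorem as the special case $P=\{\omega\}$ of its general Ramsey theorem for $\T^*_P$ (Theorem \ref{ramsey}), and your sketch reproduces that theorem's proof --- induction on height, the grafting construction $V[T]$ in the base case, and the product Ramsey theorem together with a finite Ramsey argument for increasing injections in the $\omega$-labelled inductive step. The only piece you leave implicit is the identification of $\mathcal{CT}$ with $\T^*_{\{\omega\}}$ (convex orderings on treeable semilattices correspond exactly to the branch-ordering data of Step 3), which the paper records via Proposition \ref{embeddi}.
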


The theorem above is a special case of Theorem \ref{ramsey} and is equivalent to the statement that
 $\T^*_{\{\omega\}}$ is a Ramsey class. Indeed,
the categories $\mathcal{CT}$ and $\T^*_{\{\omega\}}$ are equivalent via a covariant functor, which follows from
Proposition \ref{embeddi} and an observation that  convex orderings on  treeable semilattices
correspond to binary relations allowed in Step 3 of the definition of $\T^*_P$.

Similarly, Theorem 6.1 in \cite{S} is equivalent to the statement that each $T^*_{\{k\}}$ is a Ramsey class, $k\geq 3$, therefore again it is a special case of Theorem \ref{ramsey}.

\section{The generalized Wa\.zewski dendrite $W_P$, for an infinite $P\subseteq\{3,4,\ldots,\omega\}$}

In this section, we show that in the case $P$ is infinite, the universal minimal flow of the homeomorphism group of the generalized Wa\.zewski dendrite $W_P$ is non-metrizable, and
we point out two important consequences this fact (see Sections 4.1 and 4.2).

%%%%%%%%%

Let $\g$ be a family of finite structures.
Say that $A\in\g$ has {\em Ramsey degree} $\geq t$ iff there exist $A\leq B$ such that for every 
$B\leq C$ there exists a coloring
$c_0\colon {C \choose A} \to\{1,2,\ldots,t\}$ such that for every $g\in {C \choose B}$,
$\{ g\circ f: f\in {B \choose A} \}$ assumes $\geq t$ colors.

The Ramsey degree is infinite if for every $t$ it is $\geq t$.

\begin{theorem}[Zucker \cite{Z}, Theorem 8.7]\label{andy2}
Let $\g$ be a Fra\"{i}ss\'{e}-HP family and let $\G$ be its Fra\"{i}ss\'{e} limit. Then some $A\in\g$ has  infinite Ramsey degree iff the universal minimal flow of $\aut(\G)$ is non-metrizable.
\end{theorem}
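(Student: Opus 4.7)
The plan is to leverage the KPT--Zucker framework recalled in Section 2, with the key idea that the universal minimal flow of $G=\aut(\G)$ always embeds, as an orbit closure, into a canonical compact space of expansions of $\G$ in a suitably rich relational language. Metrizability of the UMF is equivalent to that orbit closure being second countable, and second countability will be controlled by precisely how many ``expansion types'' of each finite $A\in\g$ appear in the orbit—this is what the Ramsey degree measures.

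For the forward direction I would argue by contrapositive: if the UMF of $G$ is metrizable, then every $A\in\g$ has finite Ramsey degree. Assuming metrizability, Theorem \ref{kpt2} provides a precompact Fra\"{\i}ss\'{e}-HP expansion $\g^*$ of $\g$ that is a rigid Ramsey class with the expansion property relative to $\g$, and realizes the UMF as $X_{\G^*}$. Precompactness says each $A\in\g$ has only finitely many expansions $A^*$ in $\g^*$, say $t_A$ many. A standard product-colouring argument (colour each embedding $f\colon A\to B$ by the isomorphism type of the induced pullback expansion of $A$ from a fixed expansion $B^*$) together with the Ramsey property of $\g^*$ bounds the Ramsey degree of $A$ in $\g$ by $t_A$, which is finite.

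For the reverse direction, assume every $A\in\g$ has finite Ramsey degree $t_A$. I would construct a precompact Fra\"{\i}ss\'{e}-HP expansion $\g^*$ in some relational language $\lan^*\supseteq \lan$ that is rigid Ramsey and satisfies the expansion property relative to $\g$; then Theorem \ref{kpt2} will again identify the UMF with the compact zero-dimensional space $X_{\G^*}$, which is metrizable because $\g$ is countable and each $A$ has finitely many expansions, so $X_{\G^*}$ embeds into a countable product of finite sets. The construction is to pick, for each isomorphism type of $A\in\g$, a family of at most $t_A$ ``canonical'' expansions witnessing the Ramsey degree, and close up under the Fra\"{\i}ss\'{e}-HP amalgam.

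The hard part, which will require the real work, is making this choice of expansions coherent across the whole class $\g$: the expansions picked for larger structures must restrict to (or at least embed the) expansions picked for smaller ones, so that $\g^*$ is an actual Fra\"{\i}ss\'{e}-HP class with the Ramsey and expansion properties, not merely a Ramsey-optimal choice one $A$ at a time. The standard route here is a compactness argument: for each $n$, one finds a coherent choice of canonical expansions for all structures of size $\leq n$ (using finite Ramsey degree plus a pigeonhole), and then one extracts a globally coherent choice as a cluster point in the Tychonoff product of these finite pieces. Verifying the expansion property for the resulting $\g^*$, which encodes the ``genericity'' of the canonical point of the UMF, is the most delicate step; it follows because any non-expansion-property failure would produce arbitrarily many witness colourings contradicting finiteness of $t_A$ for some $A$.
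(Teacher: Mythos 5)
First, a point of comparison: the paper does not prove this statement at all --- it is imported verbatim as Theorem 8.7 of Zucker \cite{Z} --- so your sketch can only be judged against the literature, not against an in-paper argument.

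Your reverse direction (finite Ramsey degree for every $A$ implies a metrizable universal minimal flow) is a fair outline of the known argument: one assembles a reasonable, precompact, rigid Ramsey expansion with the expansion property by a coherence-plus-compactness argument over the finitely many ``optimal'' expansions of each $A$, and then Theorem \ref{kpt2} identifies the universal minimal flow with $X_{\mathbb{G}^*}$, which is metrizable by precompactness and countability of $\g$. The genuine gap is in your forward direction. You claim that, assuming metrizability, ``Theorem \ref{kpt2} provides a precompact Fra\"{i}ss\'{e}-HP expansion $\g^*$ \ldots{} and realizes the UMF as $X_{\mathbb{G}^*}$.'' Theorem \ref{kpt2} does nothing of the sort: it presupposes a \emph{given} reasonable, precompact expansion satisfying the relative HP and merely characterizes when the associated flow $G\acts X_{\mathbb{G}^*}$ is the universal minimal flow. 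It is not an existence statement, and the existence of such an expansion from metrizability alone is itself a theorem at least as deep as the one you are proving: in Zucker's work the equivalences ``the UMF is metrizable'', ``a precompact Ramsey expansion with the expansion property exists'', and ``all Ramsey degrees are finite'' are established together, and the passage \emph{from} metrizability requires the analysis of minimal subflows of the Samuel compactification and the comeager-orbit machinery, not the KPT correspondence alone. As written, your forward direction is therefore circular. To repair it you must either prove directly that an $A$ of infinite Ramsey degree forces non-metrizability --- e.g.\ by showing, as Zucker does, that the Ramsey degree of $A$ is computed by a canonical finite datum attached to the universal minimal flow whose size is bounded when the flow is metrizable --- or explicitly invoke the metrizable-implies-precompact-expansion theorem as an external input rather than attributing it to Theorem \ref{kpt2}.
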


Theorems \ref{andy2} and \ref{infinite2} imply that when $P$ is infinite then the universal minimal flow of
$H(W_P)$ is non-metrizable.

\begin{theorem}\label{infinite2}
Suppose that $P$ is infinite. Then there is $A\in\f_P$ which has infinite Ramsey degree.
\end{theorem}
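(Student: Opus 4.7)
My plan is to leverage the non-precompactness of the Ramsey expansion $\f^*_P$ from Theorem \ref{main} together with its expansion property. Take $A=\{a_1,a_2\}\in\f_P$ to be a single edge with both endpoints labeled $K_{p_0}$ for some fixed $p_0\in P$. For every $q\in P$ there is an expansion $A^*_q\in\f^*_P$ in which the root is added with respect to the edge $[a_1,a_2]$ and labeled $K_q$; the label is unconstrained by Step 1 of the construction because the root is placed at an edge, and the binary relation $R_q^{A^*_q}(a_1,a_2)$ records the root label. Hence the family $\{A^*_q:q\in P\}$ is infinite and pairwise non-isomorphic, so $A$ has infinitely many non-isomorphic expansions in $\f^*_P$. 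I will show that $A$ has Ramsey degree $\geq t$ for every $t$.

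\textbf{Coloring setup.} Fix $t\geq 1$ and pick distinct $q_1,\ldots,q_t\in P$. Apply the expansion property of $\f^*_P$ (Theorem \ref{main}) to each $A^*_{q_i}$ separately, obtaining $B_i\in\f_P$ such that every expansion $B_i^*\in\f^*_P$ of $B_i$ embeds $A^*_{q_i}$; then use JEP of $\f_P$ to amalgamate into a single $B\in\f_P$ containing $A$ and each $B_i$. By the relative HP, every expansion $B^*\in\f^*_P$ of $B$ restricts to an expansion of each $B_i$, and hence embeds each $A^*_{q_i}$. Given any $C\in\f_P$ with $B\leq C$, reasonability provides an expansion $C^*\in\f^*_P$. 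For $f\in\binom{C}{A}$, the relative HP turns the induced $\lan^*_P$-structure on $f(A)\subseteq C^*$ (transported to $A$ along $f$) into an expansion $A^*_f\in\f^*_P$ of $A$. Define
\[ c_0(f) \;=\; \begin{cases} i & \text{if } A^*_f\cong A^*_{q_i},\\ 1 & \text{otherwise.}\end{cases}\]

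\textbf{Verification and main obstacle.} For $g\in\binom{C}{B}$, let $B^*_g$ be the induced $\lan^*_P$-structure on $g(B)\subseteq C^*$ transported to $B$ along $g$; by relative HP it is an expansion of $B$ in $\f^*_P$, so by the choice of $B$ it embeds each $A^*_{q_i}$ via some $\phi_i$. Then $g\circ\phi_i\in\binom{C}{A}$ satisfies $A^*_{g\circ\phi_i}\cong A^*_{q_i}$, because $\phi_i$ preserves all of $\lan^*_P$ while the structure on $B^*_g$ is simply inherited from $C^*$; in particular $c_0(g\circ\phi_i)=i$. Therefore $\{g\circ f:f\in\binom{B}{A}\}$ attains all $t$ colors, witnessing Ramsey degree $\geq t$; since $t$ was arbitrary, the Ramsey degree of $A$ is infinite. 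The main bookkeeping subtlety is keeping track of the two languages $\lan_P\subseteq\lan^*_P$ and of the pull-backs of induced substructures along embeddings, but once the relative HP and expansion property are correctly invoked this is routine.
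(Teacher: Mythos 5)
Your proof is correct, but it takes a genuinely different route from the paper's. The paper's own argument is direct and self-contained: it builds a concrete tree $B$ whose successive levels are labelled $p_1,\ldots,p_t$, roots an arbitrary $C\supseteq B$ at one of its endpoints, and colours an embedding $f\colon A\to C$ by the label $p_i$ of the meet of $f(a)$ and $f(b)$ in this rooted tree; one then checks by hand that every copy of $B$ in $C$ realises meets carrying all $t$ labels. You instead derive the statement abstractly from Theorem \ref{main}: since a root added over an edge may be given any label $q\in P$, the single edge $A$ has infinitely many pairwise non-isomorphic expansions in $\f^*_P$ (distinguished by which $R_q(a_1,a_2)$ holds), and the standard lower-bound argument --- expansion property applied to each $A^*_{q_i}$, JEP, relative HP, and one fixed expansion $C^*$ of $C$ --- gives Ramsey degree $\geq t$ for every $t$. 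This is the ``number of non-isomorphic expansions bounds the Ramsey degree from below'' principle of Nguyen Van Th\'e and Zucker, and every ingredient you invoke is indeed established in Section 3 for arbitrary $P$ (only precompactness needs $P$ finite); one small imprecision is that the mere existence of \emph{some} expansion $C^*$ of $C$ comes from the reduct map being onto $\f_P$, not from reasonability. What the paper's proof buys is independence from Section 3 --- in particular from the somewhat delicate verification of the expansion property --- together with an explicit view of the combinatorial obstruction; what yours buys is brevity and the conceptual point that non-precompactness of a good expansion is itself the source of non-metrizability. Note that, once unwound, your colouring is essentially the paper's: the isomorphism type of $A^*_f$ is determined exactly by which $R_q(f(a_1),f(a_2))$ holds, i.e., by the label of the meet of $f(a_1)$ and $f(a_2)$ over the root of $C^*$, which is the paper's colouring with the root chosen at an endpoint.
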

 \begin{proof}
 Let $p_1<p_2<\ldots$ be the increasing enumeration of $P\setminus\{\omega\}$, let $A=\{a,b\}$ be such that $V(A)=\{a,b\}$, $E(A)=\{[a,b]\}$,
 $K^A_{p_1}(a)$ and $K^A_{p_1}(b)$, and let $t\geq 2$ be given. Take $B$  constructed as follows. First let $B_1$ consists of a single point $x_0$
 such that $K^{B_1}_{p_1}(x_0)$. Let $B_2$ be the tree that consists of vertices $x_0, y_1,\ldots, y_{p_1}$ and edges $[x_0, y_i]$,
  $K^{B_2}_{p_1}(x_0)$, and $K^{B_2}_{p_2}(y_i)$, $i=1,\ldots, p_1$. Having constructed $B_k$, $k\leq t$, such that for every endpoint $e$ in $B_k$
  it holds  $K^{B_k}_{p_k}(e)$, we obtain $B_{k+1}$ from $B_k$ in the following way. For every endpoint $e$ in $B_k$ pick new points $y^e_1,\ldots, y^e_{p_k-1}$
  and add vertices $[e, y^e_i]$. Note that $e$ has degree $p_k$ in $B_{k+1}$.
   If $k<t$, we let $K^{B_{k+1}}_{p_{k+1}}(y^e_i)$, and if $k=t$, let $K^{B_{k+1}}_{p_1}(y^e_i)$. Take $B=B_t$.
  
  Now take any $C\in\f_P$ such that $B\leq C$. Pick an endpoint $r$ in $C$ and consider $C$ as a rooted tree with the root $r$.
  Let $c_0\colon {C \choose A} \to\{1,2,\ldots,t\}$ be the following coloring. For an embedding $f\colon A\to C$, if $f(a) $ and $f(b)$ do not lie on the same branch in the rooted tree $C$,
  let $c_0(f)=i$ iff $K^C_{p_i}(c)$, where $c$ is the meet of $f(a)$ and $f(b)$. Otherwise, if $f(a) $ and $f(b)$  do lie on the same branch, let 
  $c_0(f)$ be an arbitrary color from $\{1,\ldots, t\}$. 
  
  Let $g\colon B\to C$ be an embedding. There are $j_1$ and $j_2$ such that $g(x_0)\leq_C g(y_{j_1}), g(y_{j_2})$, $y_{j_1}, y_{j_2} \in B_2$
  (in fact, all $j=1,\ldots,p_1$ except one have this property). Clearly in the rooted tree $B'$,  obtained from $B$ by 
  removing all vertices $z$ such that some $y_j\in B_2$, $j\neq j_1, j_2$,  is on the path connecting $z$ and $x_0\in B_1$, we have that
  for any $i$ there are endpoints $e_1$ and $e_2$ in $B'$ such that the meet of $e_1$ and $e_2$ is a vertex $c$ such that $K^{B'}_{p_i}(c)$.
  That implies that $g(B')$ and hence $g(B)$ assumes all $t$ colors.
  \end{proof}
  
  \begin{corollary}\label{abcd}
  Suppose that $P$ is infinite. Then the universal minimal flow of $H(W_P)$ is non-metrizable.
  \end{corollary}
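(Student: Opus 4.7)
The plan is essentially to assemble the corollary from three ingredients already established in the paper: Proposition \ref{unispo}, Theorem \ref{infinite2}, and Zucker's criterion Theorem \ref{andy2}. First I would invoke Proposition \ref{unispo} to identify $H(W_P)$ with $\aut(M_P)$ as topological groups; this reduces the statement to showing that the universal minimal flow of $\aut(M_P)$ is non-metrizable. Recall from the discussion just after the ultrahomogeneity proposition in Section~1 that $M_P$ is the Fra\"iss\'e limit of the Fra\"iss\'e-HP family $\f_P$, so Zucker's criterion applies verbatim with $\g=\f_P$ and $\G=M_P$.

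Next I would feed the outputs of Theorems \ref{andy2} and \ref{infinite2} into each other. By Theorem \ref{infinite2}, the hypothesis that $P$ is infinite guarantees the existence of some $A\in\f_P$ (explicitly, the two-vertex edge with both endpoints labeled by the minimum of $P\setminus\{\omega\}$) whose Ramsey degree in $\f_P$ is infinite. Theorem \ref{andy2} then says that the presence of \emph{any} structure of infinite Ramsey degree in a Fra\"iss\'e-HP family is equivalent to non-metrizability of the universal minimal flow of the automorphism group of its Fra\"iss\'e limit. Applying this equivalence to $\aut(M_P)$ directly yields that its universal minimal flow is non-metrizable, and, through the group isomorphism of Proposition \ref{unispo}, so is the universal minimal flow of $H(W_P)$.

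In this sense there is no genuine obstacle: the entire content is packaged in Theorems \ref{andy2} and \ref{infinite2}, and the corollary is their immediate logical combination with the identification $H(W_P)\cong\aut(M_P)$. The only point worth double-checking is that Theorem \ref{andy2} is being applied to $\f_P$ itself and not to the expansion $\f^*_P$ of Section~3 — in particular, the Ramsey degree referred to in Theorem \ref{infinite2} is the one in the unexpanded family — so no compatibility between $\f_P$ and $\f^*_P$ is needed for this final step. Thus the proof of Corollary \ref{abcd} reduces to a one-line citation chain once the preceding results are in place.
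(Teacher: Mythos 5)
Your proposal is correct and is exactly the argument the paper intends: the corollary is obtained by combining Proposition \ref{unispo} (identifying $H(W_P)$ with $\aut(M_P)$), Theorem \ref{infinite2} (existence of a structure in $\f_P$ of infinite Ramsey degree), and Zucker's criterion, Theorem \ref{andy2}, applied to the unexpanded family $\f_P$ with Fra\"{i}ss\'{e} limit $M_P$. Your remark that no compatibility with the expansion $\f^*_P$ is needed is also accurate.
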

  
  \subsection{$H(W_P)$ has a  non-metrizable universal minimal flow and is Roelcke precompact}

A subgroup $H$ of $S(X)$, the group of all permutations of a countable set $X$ with the pointwise convergence topology,  is {\em oligomorpic} when for every $n$, the diagonal action of $H$ on $X^n$ has only finitely many
orbits. Note that we do not assume that $H$ is a closed subgroup of $S(X)$.
A topological group $H$ is {\em Roelcke precompact } if for every open neighbourhood $U$ of $1\in H$
there exists a finite set $F\subseteq H$ such that $H=UFU$. As shown by Tsankov \cite[Theorem 2.4]{T}
a subgroup of $S(X)$ is Roelcke precompact if and only if it is an inverse limit of oligomorphic groups.

As observed by Todor Tsankov (private communication in 2013), ${\rm{Aut}}(M_P)$, for each $P\subseteq\{3,4,\ldots,\omega\}$,
is a Roelcke precompact group. This is because when we take 
\[M_n=\{m\in M_P\colon K_p(m) \text{ for some  }  p\in\{3,\ldots,n,\omega \}\}, \] 
\[G_n={\rm{Aut}}(M_n)={\rm{Aut}}(M_n, D^{M_n}, (K_p^{M_n})_{p\in P\cap\{3,\ldots,n,\omega\}}),\]  
and \[H_n=\{h\in G_n\colon \text{there exists } f\in {\rm{Aut}}(M_P) \text{ such that } h=f\restriction M_n\}, \]
then $H_n$ is an oligomorphic group  and the inverse limit of $H_n$ is equal to ${\rm{Aut}}(M_P)$.

Melleray-Nguyen Van Th\'e-Tsankov \cite{MNT} asked:
\begin{question}[Question 1.5 in  \cite{MNT}]\label{que}

 Is the universal minimal flow of every Roelcke precompact Polish group metrizable?

\end{question}
Moreover, Bodirsky-Pinsker-Tsankov \cite{BPT} asked if every  $\omega$-categorical structure has an  
$\omega$-categorical expansion which is Ramsey (which by the work of Zucker \cite{Z} is equivalent
to the question above with ``Roelcke precompact Polish'' replaced by ``oligomorphic'').

Evans-Hubi\v{c}ka-Ne\v{s}et\v{r}il  \cite{EHN} answered Question \ref{que} in the negative. They provided an example of an oligomorphic
  group with  a non-metrizable universal minimal flow. Their example is much more involved than ours,
  it is based on a very non-trivial construction due to Hrushovski~\cite{Hr}, see also \cite{En}.

%The importance of groups ${\rm{Aut}}(M_P)$, being at the same time Roelcke precompact
%and having a non-metrizable universal minimal flow,  when $P$ is infinite, comes from the question:

 \subsection{$H(W_P)$ has a  non-metrizable universal minimal flow with a comeager orbit}

Ben Yaacov-Melleray-Tsankov  \cite{BMT}, generalizing a result of Zucker \cite{Z}, showed:
\begin{theorem}[Theorem 1.2 in \cite{BMT}]
Let $G$ be a Polish group whose universal minimal flow $M(G)$ is metrizable. Then $M(G)$ has a comeager orbit.
\end{theorem}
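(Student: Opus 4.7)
The plan is to produce a point $x^\ast\in M(G)$ whose $G$-orbit is both dense and $G_\delta$ in $M(G)$; by the Baire category theorem applied to the Polish space $M(G)$, a dense $G_\delta$ is comeager, which would finish the proof. Since $M(G)$ is compact and metrizable, it is a Polish $G$-space, and the action map $G\times M(G)\to M(G)$ is continuous; minimality already gives us that \emph{every} orbit is dense, so the entire burden is to find one orbit that is $G_\delta$.

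The first tool I would reach for is the Effros-type dichotomy for continuous Polish group actions on Polish spaces: the orbit $G\cdot x$ is $G_\delta$ in $M(G)$ if and only if the natural continuous bijection $G/G_x \to G\cdot x$ is a homeomorphism, which in turn holds if and only if $G\cdot x$ is non-meager in itself. Thus the problem reduces to showing that for \emph{some} $x^\ast$, the orbit $G\cdot x^\ast$ is non-meager in $M(G)$ (equivalently, has the Baire property and is not meager). Once this is known, density plus being $G_\delta$ yields the comeager orbit.

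The main step, and the genuine obstacle, is then the production of such an $x^\ast$. Ordinary Baire category does not suffice, because a minimal Polish flow may have every orbit meager (irrational rotation on the circle is the textbook example). What must be used is the \emph{universality} of $M(G)$: $M(G)$ maps onto every minimal $G$-flow and carries an Ellis semigroup structure. I would fix a minimal left ideal in the Ellis semigroup $E(M(G))$ and an idempotent $u$ in it; the group $uE(M(G))u$ (the ``Ellis group'') together with the universality of $M(G)$ should be used to exhibit a canonical orbit with a Polish-group-theoretic description. Concretely, one tries to realize $M(G)$ as the completion of a coset space $G/H$ for a suitable closed subgroup $H$ (possible under metrizability), so that the orbit of the distinguished point $[H]$ is Polish in its own right and therefore $G_\delta$ in $M(G)$ by Effros.

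The hardest part, and the reason the proof of Ben Yaacov--Melleray--Tsankov is substantive, is executing this last identification for an arbitrary Polish group without extra hypotheses such as Roelcke precompactness or non-archimedean structure. Their approach encodes $M(G)$ as a space of types in continuous logic and produces a \emph{principal} (isolated) complete type whose realization set is the desired comeager orbit; a different route, via Zucker's theorem, works directly with the Samuel compactification and extracts a generic orbit using a metrizability-to-separability argument on a cofinal family of finite approximations. Either way, the crux is manufacturing the $G_\delta$ orbit, after which minimality and Baire category close the argument immediately.
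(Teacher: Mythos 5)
This statement is not proved in the paper at all: it is Theorem~1.2 of \cite{BMT}, imported verbatim as a black box, so there is no internal proof to compare your argument against. Judged on its own, your proposal gets the standard first reduction right: since every orbit of a minimal flow is dense, and since (by Effros' theorem for continuous Polish group actions on Polish spaces) an orbit is $G_\delta$ in its closure if and only if it is non-meager in its closure, it suffices to exhibit a single non-meager orbit, which is then a dense $G_\delta$ and hence comeager by Baire category. Your irrational-rotation example correctly shows that minimality plus metrizability of a flow cannot suffice and that universality of $M(G)$ must enter.

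The genuine gap is that the central step --- producing a non-meager orbit from the metrizability of $M(G)$ --- is never carried out; it is only described. In particular, the assertion that one can ``realize $M(G)$ as the completion of a coset space $G/H$ for a suitable closed subgroup $H$ (possible under metrizability)'' is not an available lemma: by the earlier work of Melleray--Nguyen Van Th\'e--Tsankov, the identification $M(G)\cong\reallywidehat{G/H}$ with $H$ closed, extremely amenable and co-precompact is \emph{equivalent} to $M(G)$ being metrizable \emph{with} a comeager orbit, so invoking it here is circular. Likewise, fixing an idempotent in a minimal left ideal of the Ellis semigroup gives no category-theoretic information about its orbit without further work. What \cite{BMT} actually supply at this point is a substantive new argument (a topometric/continuous-logic analysis of the Samuel compactification and its metrizable minimal quotients) that manufactures the non-meager orbit; your write-up names this machinery but does not reproduce it, so the proposal is an accurate roadmap rather than a proof.
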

They asked if the converse holds:
\begin{question}[Question 1.3 in \cite{BMT}]\label{que2}
Suppose that $G$ is a Polish group such that $M(G)$ has a comeager
orbit. Is it true that $M(G)$ is metrizable?

\end{question}

After this preprint was posted on arXiv, Zucker \cite{Zt} showed:
\begin{theorem}\label{thesis}
Let $\g$ be a Fra\"{i}ss\'{e}-HP family.
Suppose that  there exists a reasonable  Fra\"{i}ss\'{e}-HP expansion $\g^*$ of $\g$, which has the relative HP, the expansion, and the Ramsey properties, but the precompactness fails. Then the universal minimal flow of ${\rm Aut}(\mathbb{G})$, where 
$\mathbb{G}$ is the Fra\"{i}ss\'{e} limit of $\mathcal{G}$, has a comeager orbit.
\end{theorem}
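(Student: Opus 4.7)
The plan is to mimic the precompact KPT framework in order to produce a Polish (non-compact) $G$-flow carrying the data of $\g^*$, and then to transport a dense orbit in this Polish flow to a comeager orbit in the universal minimal flow of $G := \aut(\G)$. Write $H := \aut(\G^*)$.

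First I would verify that extreme amenability of $H$ survives the loss of precompactness: the proof of Theorem \ref{kpt1} uses only the Ramsey property of $\g^*$ and rigidity of its members (which follows from AP + JEP + Ramsey as in Section 3.4), and does not invoke precompactness. Next, form the space $X_{\G^*}$ as in Section 2. Without precompactness it is a Polish but non-compact $G$-space, yet the relative HP still places $\vec{R}^\G$ inside $X_{\G^*}$, and reasonability together with the argument behind Theorem \ref{iden} identifies the $G$-orbit of $\vec{R}^\G$ with $G/H$ equipped with the right uniformity. A back-and-forth argument using the expansion property of $\g^*$ shows that this orbit is dense in $X_{\G^*}$. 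Since $X_{\G^*}$ is Polish, Effros' theorem applied to a Polish $G$-space with a dense orbit then upgrades density to comeagerness, so $G \cdot \vec{R}^\G$ is comeager in $X_{\G^*}$.

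Next I would transfer this comeagerness to $M(G)$. Extreme amenability of $H$ produces a continuous $G$-surjection $\pi \colon M(G) \twoheadrightarrow Y$, where $Y$ is the compact $G$-flow obtained as the closure of the image of $eH$ inside the Samuel compactification of $G/H$; one can further construct a $G$-equivariant continuous map $X_{\G^*} \to Y$ sending $\vec{R}^\G$ to this distinguished point. The crucial point is that $\pi$ can be arranged to be \emph{maximally highly proximal} in the sense of Auslander-Glasner. Highly proximal maps pull back Baire-generic sets to Baire-generic sets, so the $\pi$-preimage of the comeager orbit in $Y$ (the image of $G \cdot \vec{R}^\G$) is a $G$-invariant comeager subset of $M(G)$. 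Minimality of $M(G)$ forces this preimage to be contained in a single $G$-orbit, yielding the required comeager orbit.

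The main obstacle is the MHP step: establishing that $\pi$ is maximally highly proximal and that such extensions preserve comeager sets when $M(G)$ is non-metrizable. In the metrizable (precompact) situation, this reduces to the arguments of Ben Yaacov-Melleray-Tsankov \cite{BMT}; without metrizability, one must replace those with Zucker's framework from \cite{Zt}, which realizes $M(G)$ as an MHP extension of canonical minimal subflows of the Samuel compactification and carries out a careful Baire-category analysis of highly proximal $G$-extensions of non-metrizable compact flows.
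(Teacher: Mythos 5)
First, a remark on the comparison itself: the paper offers no proof of this theorem --- it is quoted from Zucker's thesis \cite{Zt} as an external result --- so your outline can only be judged on its own terms, and on those terms it has genuine gaps. The first is the step ``Since $X_{\mathbb{G}^*}$ is Polish, Effros' theorem applied to a Polish $G$-space with a dense orbit then upgrades density to comeagerness.'' Effros' theorem does not do this: it says that a \emph{non-meager} orbit is comeager in its closure (equivalently, that the orbit map is open, equivalently that the orbit is $G_\delta$), and it gives no way to promote density to non-meagerness. The $\mathbb{Z}$-orbit of an irrational rotation of the circle is a dense, meager orbit of a Polish group acting on a Polish space. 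To get comeagerness of $G\cdot\vec{R}^{\mathbb{G}}$ in $X_{\mathbb{G}^*}$ you need an actual genericity argument --- e.g.\ showing via the JEP and AP of $\g^*$ that the set of $\vec{R}\in X_{\mathbb{G}^*}$ for which $(\mathbb{G},\vec{R})$ has the one-point extension property over $\g^*$ is a dense $G_\delta$, so that the isomorphism class of $\mathbb{G}^*$, which is exactly the orbit, is comeager. That is real work which the outline skips.

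The second and more serious gap is the transfer to $M(G)$. Even granting that $\pi\colon M(G)\to Y$ exists (which already requires knowing that $Y$, the Samuel compactification of $G/H$, is a \emph{minimal} flow --- this is where the expansion property must be used and you do not use it), and granting that $\pi$ is irreducible so that preimages of comeager sets are comeager, the conclusion ``minimality of $M(G)$ forces this preimage to be contained in a single $G$-orbit'' is a non sequitur: the preimage of an orbit under a $G$-map is $G$-invariant, but a non-closed invariant set in a minimal flow is in general a union of many orbits, and minimality controls only closed invariant sets. You would need the fibers of $\pi$ over the distinguished orbit to be singletons, i.e.\ that $M(G)$ really is $\mathrm{Sa}(G/H)$ and not a proper highly proximal extension of it. Moreover, you conflate comeagerness of the orbit in the Polish, non-compact space $X_{\mathbb{G}^*}$ (essentially the completion of $G/H$) with comeagerness in the compact, non-metrizable flow $Y\subseteq\mathrm{Sa}(G/H)$; without precompactness these are very different ambient spaces, and a priori the completion could itself be meager in the Samuel compactification (compare $\mathbb{Q}$ inside $\mathrm{Sa}(\mathbb{Q})$). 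Establishing that $M(G)\cong\mathrm{Sa}(G/H)$ and that the orbit of the basepoint is comeager \emph{there} is precisely the content of Zucker's theorem; deferring it to ``Zucker's framework'' in the last paragraph makes the argument circular. The direct route is to prove that $\mathrm{Sa}(G/H)$ is universal (from extreme amenability of $H$) and minimal (from the expansion property), identify it with $M(G)$, and then analyze its topology to locate the comeager orbit.
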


Theorems \ref{main} and  \ref{thesis}, together with Corollary \ref{abcd},  provide the negative answer to Question \ref{que2}.

\section*{Acknowledgements}
I thank Todor Tsankov for our discussions on generalized Wa\.zewski dendrites in 2013.
I also thank Miodrag  Soki\'c, Bruno Duchesne, and the referee,
for many valuable remarks that improved the presentation of the paper.
The author was supported by Narodowe Centrum Nauki grant 2016/23/D/ST1/01097.

\end{document}